\theoremstyle{definition}
\newtheorem{definition}{Definition}
\newtheorem{algorithm}[definition]{Algorithm}
\newtheorem{example}[definition]{Example}
\theoremstyle{plain}
\newtheorem{lemma}[definition]{Lemma}
\newtheorem{proposition}[definition]{Proposition}
\newtheorem{theorem}[definition]{Theorem}
\newtheorem{corollary}[definition]{Corollary}
\newtheorem{conjecture}[definition]{Conjecture}
\begin{document}

\title{Leibniz triple systems}

\author{Murray R. Bremner and Juana S\'anchez-Ortega}

\address{Department of Mathematics and Statistics, University of Saskatchewan, Canada}

\email{bremner@math.usask.ca}

\address{Department of Algebra, Geometry and Topology, University of M\'alaga, Spain}

\email{jsanchez@agt.cie.uma.es}

\begin{abstract}
We define Leibniz triple systems in a functorial manner using the algorithm of Kolesnikov and Pozhidaev
which converts identities for algebras into identities for dialgebras.
We verify that Leibniz triple systems are the natural analogues of Lie triple systems in the context of dialgebras 
by showing that both the iterated bracket in a Leibniz algebra and the permuted associator in a Jordan dialgebra
satisfy the defining identities for Leibniz triple systems. 
We construct the universal Leibniz envelopes of Leibniz triple systems and prove that every identity satisfied
by the iterated bracket in a Leibniz algebra is a consequence of the defining identities for Leibniz triple systems.
To conclude, we present some examples of 2-dimensional Leibniz triple systems and their universal Leibniz envelopes.
\end{abstract}

\maketitle


\section{Introduction}

In this paper we introduce Leibniz triple systems, which are related to Leibniz algebras
in the same way that Lie triple systems are related to Lie algebras. 
Our motivation is to present a new type of ternary algebra with potential applications in theoretical physics.
See our recent paper \cite{BSO} for a related result on the partially alternating ternary sum 
in an associative dialgebra.

We start by recalling in Section \ref{sectionpreliminaries} the definitions of associative dialgebras and Leibniz algebras.
We state the Kolesnikov-Pozhidaev (KP) algorithm which takes as input the defining identities for a variety of 
algebras and produces as output the defining identities for the corresponding variety of dialgebras.
As examples, we recall how associative dialgebras and Leibniz algebras can be obtained from associative and Lie algebras 
by an application of this algorithm.

In Section \ref{sectionleibniztriplesystems} we apply the KP algorithm to Lie triple systems, and
obtain a new variety of triple systems; we call these structures \emph{Leibniz triple systems}.
We show that this variety of structures may be characterized by two multilinear identities.
In Section \ref{sectionoperatoridentities} we reformulate the defining identities for Leibniz triple systems
in terms of left and right multiplication operators.

In Section \ref{sectionfreeleibnizalgebras} we use the structure theory for free Leibniz algebras to verify that
any subspace of a Leibniz algebra closed under the iterated Leibniz bracket is a Leibniz triple system.
In Section \ref{sectionjordandialgebras} we prove that any subspace of a Jordan dialgebra (quasi-Jordan algebra)
closed under the associator is a Leibniz triple system.

In Section \ref{sectionuniversalleibnizenvelopes} we construct universal Leibniz envelopes for Leibniz triple systems.
From this we obtain the corollary that every polynomial identity satisfied by the iterated bracket in 
a Leibniz algebra is a consequence of the defining identities for Leibniz triple systems.

In Section \ref{sectiontwodimensional} we conclude the paper with a conjectured classification of 2-di\-men\-sional 
Leibniz triple systems; we also construct their universal Leibniz envelopes.

In summary, our results demonstrate that Leibniz triple systems are the natural analogue of Lie
triple systems in the context of dialgebras.


\section{Preliminaries} \label{sectionpreliminaries}

\subsection{Dialgebras and Leibniz algebras}

Dialgebras were introduced by Loday \cite{LodayDialgebras} (see also \cite{LodaySurvey}) 
to provide a natural setting for Leibniz algebras, a ``noncommutative'' version of Lie algebras.

\begin{definition} (Cuvier \cite{Cuvier}, Loday \cite{LodayLeibniz})
A \textbf{(right) Leibniz algebra} is a vector space $L$, together with a bilinear map 
$L \times L \to L$, denoted $(a,b) \mapsto \langle a,b \rangle$, satisfying the 
\textbf{(right) Leibniz identity}, which says that right multiplications are derivations:
  \begin{equation} \label{Rightleibnizidentity}
  \langle \langle a, b \rangle, c\rangle 
  \equiv 
  \langle \langle a, c \rangle, b \rangle 
  + 
  \langle a, \langle b, c \rangle \rangle.
  \end{equation}
If $\langle a, a \rangle \equiv 0$ then the Leibniz identity is the Jacobi identity and $L$
is a Lie algebra.
\end{definition}

An associative algebra becomes a Lie algebra if the product $ab$ is replaced by the Lie bracket $ab - ba$. 
If we replace the product $ab$ and its opposite $ba$ by two distinct operations 
$a \dashv b$ and $b \vdash a$, 
the we obtain the notion of an associative dialgebra, in which the Leibniz bracket
$a \dashv b - b \vdash a$ is not necessarily skew-symmetric.

\begin{definition} \label{definitiondialgebra}
An \textbf{associative dialgebra} is a vector space $A$ with two bilinear maps $A \times A \to A$, 
denoted $\dashv$ and $\vdash$ and called the \textbf{left} and \textbf{right} products, 
satisfying the \textbf{left and right bar identities}, and \textbf{left, right and inner associativity}:
\begin{alignat*}{3}
&
( a \dashv b ) \vdash c
\equiv
( a \vdash b ) \vdash c,
&\quad
&
a \dashv ( b \dashv c )
\equiv
a \dashv ( b \vdash c ),
\\
&
( a \dashv b ) \dashv c
\equiv
a \dashv ( b \dashv c ),
&\quad
&
( a \vdash b ) \vdash c
\equiv
a \vdash ( b \vdash c ),
&\quad
&
( a \vdash b ) \dashv c
\equiv
a \vdash ( b \dashv c ).
\end{alignat*}
The Leibniz bracket in an associative dialgebra satisfies the Leibniz identity.
\end{definition}

\subsection{KP Algorithm}

Kolesnikov \cite{Kolesnikov} introduced a general categorical framework for transforming the defining identities 
of a variety of binary algebras (associative, Lie, Jordan, etc.) into the defining identities of the 
corresponding variety of dialgebras.
This procedure was extended by Pozhidaev (in an unpublished preprint) 
to varieties of arbitrary $n$-ary (multioperator) algebras.
In this subsection we  present a simplified statement of the Kolesnikov-Pozhidaev (KP) algorithm. 

\begin{algorithm}
The input is a multilinear polynomial identity of degree $d$ for an $n$-ary operation;
the output is a collection of $d$ multilinear identities of degree $d$ for $n$ new $n$-ary operations. 

Part 1:
We consider a multilinear $n$-ary operation, denoted by the symbol
  \begin{equation} \label{operation}
  \{-,\dots,-\} 
  \qquad
  \text{($n$ arguments)}.
  \end{equation}
Given a multilinear identity of degree $d$ in this operation, 
we describe the application of the algorithm to one monomial, 
and extend this by linearity to the entire identity. 
Let $\overline{a_1 a_2 \dots a_d}$
be a multilinear monomial of degree $d$, where the bar denotes some placement of $n$-ary operation symbols \eqref{operation}. 
We introduce $n$ new $n$-ary operations, using the same operation symbol but distinguished by subscripts:
  \begin{equation} \label{noperations}
  \{-,\dots,-\}_1,
  \quad
  \dots,
  \quad
  \{-,\dots,-\}_n.
  \end{equation}
For each $i = 1, \dots, d$ we convert the monomial $\overline{a_1 a_2 \dots a_d}$
in the $n$-ary operation \eqref{operation} 
into a new monomial of the same degree in the $n$ new $n$-ary operations \eqref{noperations},
according to the following rule, based on the position of the indeterminate $a_i$. 
For each occurrence of operation \eqref{operation} in the monomial, 
either $a_i$ occurs within one of the $n$ arguments or not, and we have two 
cases:
  \begin{itemize}
  \item
  If $a_i$ occurs in the $j$-th argument then we convert this occurrence of $\{\dots\}$ to 
  the $j$-th new operation symbol $\{\dots\}_j$.
  \item
  If $a_i$ does not occur in any of the $n$ arguments, then either
    \begin{itemize}
    \item
    $a_i$ occurs to the left of this occurrence of $\{\dots\}$:
    we convert $\{\dots\}$ to the first new operation symbol $\{\dots\}_1$, or
    \item
    $a_i$ occurs to the right of this occurrence of $\{\dots\}$:
    we convert $\{\dots\}$ to the last new operation symbol $\{\dots\}_n$.
    \end{itemize}
  \end{itemize}
In step $i$ we call $a_i$ the central indeterminate of the monomial.

Part 2:
We also include the following identities,
analogous to the bar identities for associative dialgebras,
for all $i, j = 1, \dots, n$ with $i \ne j$ 
and all $k, \ell = 1, \dots, n$: 
  \begin{align*}  
  &
  \{ a_1, \dots, a_{i-1}, \{ b_1, \cdots, b_n \}_k, a_{i+1}, \dots, a_n \}_j
  \equiv
  \\ 
  &
  \{ a_1, \dots, a_{i-1}, \{ b_1, \cdots, b_n \}_\ell, a_{i+1}, \dots, a_n \}_j.
  \end{align*} 
This identity says that the $n$ new operations are interchangeable in 
the $i$-th argument of the $j$-th new operation when $i \ne j$.
\end{algorithm}

\begin{example}
The defining identities for associative dialgebras can be obtained by applying
the KP algorithm to the associativity identity $( a \circ b ) \circ c \equiv a \circ ( b \circ c )$.
The operation $\circ$ produces two new operations $\circ_1$ and $\circ_2$.
Part 1 of the algorithm gives three identities of degree 3,
making $a$, $b$, $c$ in turn the central indeterminate:
  \[
  ( a \circ_1 b ) \circ_1 c \equiv a \circ_1 ( b \circ_1 c ), \;
  ( a \circ_2 b ) \circ_1 c \equiv a \circ_2 ( b \circ_1 c ), \;
  ( a \circ_2 b ) \circ_2 c \equiv a \circ_2 ( b \circ_2 c ).
  \] 
Part 2 of the algorithm gives two identities:
  \[
  a \circ_1 ( b \circ_1 c ) \equiv a \circ_1 ( b \circ_2 c ), 
  \qquad
  ( a \circ_1 b ) \circ_2 c \equiv ( a \circ_2 b ) \circ_2 c.
  \] 
If we write $a \dashv b$ for $a \circ_1 b$ and $a \vdash b$ for $a \circ_2 b$
then we obtain Definition \ref{definitiondialgebra}.
\end{example}

\begin{example}
The defining identities for Leibniz algebras (Lie dialgebras)
can be obtained by applying the KP algorithm to the defining identities for Lie algebras:
anticommutativity (in its multilinear form) and the Jacobi identity,
  \[
  [a,b] + [b,a] \equiv 0,
  \qquad
  [[a,b],c] + [[b,c],a] + [[c,a],b] \equiv 0.
  \]
Part 1 of the algorithm produces the following five identities:
  \begin{alignat*}{2}
  &
  [a,b]_1 + [b,a]_2 \equiv 0,
  &\qquad
  &
  [[a,b]_1,c]_1 + [[b,c]_2,a]_2 + [[c,a]_2,b]_1 \equiv 0,
  \\   
  &
  [a,b]_2 + [b,a]_1 \equiv 0,
  &\qquad 
  &
  [[a,b]_2,c]_1 + [[b,c]_1,a]_1 + [[c,a]_2,b]_2 \equiv 0,
  \\
  &&&
  [[a,b]_2,c]_2 + [[b,c]_2,a]_1 + [[c,a]_1,b]_1 \equiv 0.
  \end{alignat*}
The two identities of degree 2 are both equivalent to $[a,b]_2 \equiv -[b,a]_1$,
so the second operation is superfluous.
Eliminating the second operation from the three identities of degree 3 shows that each of them is 
equivalent to the identity
  \[
  [[a,b]_1,c]_1 + [a,[c,b]_1]_1 - [[a,c]_1,b]_1 \equiv 0.
  \]
If we write $\langle a, b \rangle = [a,b]_1$ then we obtain an identity equivalent to \eqref{Rightleibnizidentity}.
Part 2 of the KP algorithm produces the following two identities:
  \[
  [ a, [ b, c ]_1 ]_1 \equiv [ a, [ b, c ]_2 ]_1,
  \qquad
  [ [ a, b ]_1, c ]_2 \equiv [ [ a, b ]_2, c ]_2.
  \] 
Eliminating the second operation reduces these to right anticommutativity:
  \begin{equation} \label{rightanticommutativity}
  \langle a, \langle b, c \rangle \rangle
  +
  \langle a, \langle c, b \rangle \rangle
  \equiv
  0.
  \end{equation} 
Setting $b = c$ in \eqref{Rightleibnizidentity} gives 
$\langle a, \langle b, b \rangle \rangle \equiv 0$,
and the linearization of this is \eqref{rightanticommutativity}.
We conclude that the output of the KP algorithm is equivalent to identity \eqref{Rightleibnizidentity}.
\end{example}


\section{Leibniz triple systems} \label{sectionleibniztriplesystems}

\begin{definition}
A \textbf{Lie triple system} is a vector space $T$ with a trilinear operation $T \times T \times T \to T$,
denoted $(a,b,c) \mapsto [a,b,c]$, satisfying these identities:
  \allowdisplaybreaks
  \begin{align}
  &[a,b,c] + [b,a,c] \equiv 0,
  \label{skew} \tag{L1}
  \\
  &[a,b,c] + [b,c,a] + [c,a,b] \equiv 0,
  \label{cyclic} \tag{L2}
  \\
  &[a,b,[c,d,e]] - [[a,b,c],d,e] - [c,[a,b,d],e] - [c,d,[a,b,e]] \equiv 0.
  \label{derivation} \tag{L3}
  \end{align}
\end{definition}

We apply Part 1 of the KP algorithm to \eqref{skew}, \eqref{cyclic}, \eqref{derivation} and obtain 11 identities:
  \allowdisplaybreaks
  \begin{align}
  &[a,b,c]_1 + [b,a,c]_2 \equiv 0,
  \label{skew1}
  \\
  &[a,b,c]_2 + [b,a,c]_1 \equiv 0,
  \label{skew2}
  \\
  &[a,b,c]_3 + [b,a,c]_3 \equiv 0,
  \label{skew3}
  \\
  &[a,b,c]_1 + [b,c,a]_3 + [c,a,b]_2 \equiv 0,
  \label{cyclic1}
  \\
  &[a,b,c]_2 + [b,c,a]_1 + [c,a,b]_3 \equiv 0,
  \label{cyclic2}
  \\
  &[a,b,c]_3 + [b,c,a]_2 + [c,a,b]_1 \equiv 0,
  \label{cyclic3}
  \\
  &[a,b,[c,d,e]_1]_1 - [[a,b,c]_1,d,e]_1 - [c,[a,b,d]_1,e]_2 - [c,d,[a,b,e]_1]_3 \equiv 0,
  \label{derivation1}
  \\
  &[a,b,[c,d,e]_1]_2 - [[a,b,c]_2,d,e]_1 - [c,[a,b,d]_2,e]_2 - [c,d,[a,b,e]_2]_3 \equiv 0,
  \label{derivation2}
  \\
  &[a,b,[c,d,e]_1]_3 - [[a,b,c]_3,d,e]_1 - [c,[a,b,d]_1,e]_1 - [c,d,[a,b,e]_1]_1 \equiv 0,
  \label{derivation3}
  \\
  &[a,b,[c,d,e]_2]_3 - [[a,b,c]_3,d,e]_2 - [c,[a,b,d]_3,e]_2 - [c,d,[a,b,e]_1]_2 \equiv 0,
  \label{derivation4}
  \\
  &[a,b,[c,d,e]_3]_3 - [[a,b,c]_3,d,e]_3 - [c,[a,b,d]_3,e]_3 - [c,d,[a,b,e]_3]_3 \equiv 0.
  \label{derivation5}
  \end{align}
Identities \eqref{skew1} and \eqref{skew2} are both equivalent to 
  \begin{equation}
  [a,b,c]_2 \equiv - [b,a,c]_1,
  \label{reduce2}
  \end{equation}
which shows that the second operation is superfluous.
Identities \eqref{cyclic1}, \eqref{cyclic2}, \eqref{cyclic3} are equivalent, 
and applying \eqref{reduce2} to eliminate the second operation we obtain
  \begin{equation}
  [a,b,c]_3 \equiv - [b,c,a]_2 - [c,a,b]_1 \equiv [c,b,a]_1 - [c,a,b]_1,
  \label{reduce3}
  \end{equation}
which shows that the third operation is superfluous. 
Applying \eqref{reduce2} to identities \eqref{derivation1}--\eqref{derivation5} 
to eliminate the second operation we obtain
  \allowdisplaybreaks
  \begin{align}
  [a, b, [c, d, e]_1]_1 - [[a, b, c]_1, d, e]_1 + [[a, b, d]_1, c, e]_1 - [c, d, [a, b, e]_1]_3 &\equiv 0, 
  \label{derivation1a}
  \\
  - [b, a, [c, d, e]_1]_1 + [[b, a, c]_1, d, e]_1 - [[b, a, d]_1, c, e]_1 + [c, d, [b, a, e]_1]_3 &\equiv 0, 
  \label{derivation2a}
  \\
  [a, b, [c, d, e]_1]_3 - [[a, b, c]_3, d, e]_1 - [c, [a, b, d]_1, e]_1 - [c, d, [a, b, e]_1]_1 &\equiv 0, 
  \label{derivation3a}
  \\
  - [a, b, [d, c, e]_1]_3 + [d, [a, b, c]_3, e]_1 + [[a, b, d]_3, c, e]_1 - [d, c, [a, b, e]_1]_1 &\equiv 0, 
  \label{derivation4a}
  \\
  [a, b, [c, d, e]_3]_3 - [[a, b, c]_3, d, e]_3 - [c, [a, b, d]_3, e]_3 - [c, d, [a, b, e]_3]_3 &\equiv 0.
  \label{derivation5a}
  \end{align}
Identities \eqref{derivation1a} and \eqref{derivation2a} are equivalent, 
as are \eqref{derivation3a} and \eqref{derivation4a}.
Applying \eqref{reduce3} to identities \eqref{derivation1a}, \eqref{derivation3a}, \eqref{derivation5a} 
to eliminate the third operation we obtain
  \allowdisplaybreaks
  \begin{align*}
  &
  [a, b, [c, d, e]_1]_1 - [[a, b, c]_1, d, e]_1 + [[a, b, d]_1, c, e]_1 - [[a, b, e]_1, d, c]_1 
  \\
  &\quad
  + [[a, b, e]_1, c, d]_1
  \equiv 0,
  \\
  &
  [[c, d, e]_1, b, a]_1 - [[c, d, e]_1, a, b]_1 - [[c, b, a]_1, d, e]_1 + [[c, a, b]_1, d, e]_1 
  \\
  &\quad
  - 
  [c, [a, b, d]_1, e]_1 - [c, d, [a, b, e]_1]_1
  \equiv 0,
  \\
  &
  [[e, d, c]_1, b, a]_1 - [[e, c, d]_1, b, a]_1 - [[e, d, c]_1, a, b]_1 + [[e, c, d]_1, a, b]_1 
  \\
  &\quad
  - 
  [e, d, [c, b, a]_1]_1 + [e, d, [c, a, b]_1]_1 + [e, [c, b, a]_1, d]_1 - [e, [c, a, b]_1, d]_1   
  \\
  &\quad
  - 
  [e, [d, b, a]_1, c]_1 + [e, [d, a, b]_1, c]_1 + [e, c, [d, b, a]_1]_1 - [e, c, [d, a, b]_1]_1 
  \\
  &\quad
  -
  [[e, b, a]_1, d, c]_1 + [[e, a, b]_1, d, c]_1 + [[e, b, a]_1, c, d]_1 - [[e, a, b]_1, c, d]_1 
  \equiv 0.
  \end{align*}
If we write $\langle a,b,c \rangle$ for $[a,b,c]_1$ then we obtain the identities
  \allowdisplaybreaks
  \begin{align}
  &
  \langle a, b, \langle c, d, e \rangle \rangle  
  - 
  \langle \langle a, b, c \rangle, d, e \rangle  
  + 
  \langle \langle a, b, d \rangle, c, e \rangle  
  - 
  \langle \langle a, b, e \rangle, d, c \rangle  
  \label{derivation1b}
  \\
  &\quad
  + 
  \langle \langle a, b, e\rangle , c, d\rangle 
  \equiv 0,
  \notag
  \\
  &
  \langle \langle c, d, e \rangle, b, a \rangle  
  - 
  \langle \langle c, d, e \rangle, a, b \rangle  
  - 
  \langle \langle c, b, a \rangle, d, e \rangle  
  + 
  \langle \langle c, a, b \rangle, d, e \rangle  
  \label{derivation3b}
  \\
  &\quad
  - 
  \langle c, \langle a, b, d \rangle, e \rangle  
  - 
  \langle c, d, \langle a, b, e \rangle \rangle 
  \equiv 0,
  \notag
  \\
  &
  \langle \langle e, d, c \rangle, b, a \rangle  
  - 
  \langle \langle e, c, d \rangle, b, a \rangle  
  - 
  \langle \langle e, d, c \rangle, a, b \rangle  
  + 
  \langle \langle e, c, d \rangle, a, b \rangle  
  \label{derivation5b}
  \\
  &\quad
  - 
  \langle e, d, \langle c, b, a \rangle \rangle  
  + 
  \langle e, d, \langle c, a, b \rangle \rangle  
  + 
  \langle e, \langle c, b, a \rangle, d \rangle  
  - 
  \langle e, \langle c, a, b \rangle, d \rangle    
  \notag
  \\
  &\quad
  - 
  \langle e, \langle d, b, a \rangle, c \rangle  
  + 
  \langle e, \langle d, a, b \rangle, c \rangle  
  + 
  \langle e, c, \langle d, b, a \rangle \rangle  
  - 
  \langle e, c, \langle d, a, b \rangle \rangle  
  \notag
  \\
  &\quad
  -
  \langle \langle e, b, a \rangle, d, c \rangle  
  + 
  \langle \langle e, a, b \rangle, d, c \rangle  
  + 
  \langle \langle e, b, a \rangle, c, d \rangle  
  - 
  \langle \langle e, a, b \rangle, c, d \rangle  
  \equiv 0.
  \notag
  \end{align}

We apply Part 2 of the KP algorithm and obtain 12 identities:
  \allowdisplaybreaks
  \begin{align}
  [ a, [ b, c, d ]_1, e ]_1 &\equiv [ a, [ b, c, d ]_2, e ]_1,
  \label{1.2.a}
  \\
  [ a, [ b, c, d ]_1, e ]_1 &\equiv [ a, [ b, c, d ]_3, e ]_1,
  \label{1.2.b}
  \\
  [ a, b, [ c, d, e ]_1 ]_1 &\equiv [ a, b, [ c, d, e ]_2 ]_1,
  \label{1.3.a}
  \\
  [ a, b, [ c, d, e ]_1 ]_1 &\equiv [ a, b, [ c, d, e ]_3 ]_1,
  \label{1.3.b}
  \\
  [ [ a, b, c ]_1, d, e ]_2 &\equiv [ [ a, b, c ]_2, d, e ]_2,
  \label{2.1.a}
  \\
  [ [ a, b, c ]_1, d, e ]_2 &\equiv [ [ a, b, c ]_3, d, e ]_2,
  \label{2.1.b}
  \\  
  [ a, b, [ c, d, e ]_1 ]_2 &\equiv [ a, b, [ c, d, e ]_2 ]_2,
  \label{2.3.a}
  \\
  [ a, b, [ c, d, e ]_1 ]_2 &\equiv [ a, b, [ c, d, e ]_3 ]_2,
  \label{2.3.b}
  \\
  [ [ a, b, c ]_1, d, e ]_3 &\equiv [ [ a, b, c ]_2, d, e ]_3,
  \label{3.1.a}
  \\
  [ [ a, b, c ]_1, d, e ]_3 &\equiv [ [ a, b, c ]_3, d, e ]_3,
  \label{3.1.b}
  \\  
  [ a, [ b, c, d ]_1, e ]_3 &\equiv [ a, [ b, c, d ]_2, e ]_3,
  \label{3.2.a}
  \\
  [ a, [ b, c, d ]_1, e ]_3 &\equiv [ a, [ b, c, d ]_3, e ]_3.
  \label{3.2.b}
  \end{align}
Applying \eqref{reduce2} and \eqref{reduce3} to identities \eqref{1.2.a}--\eqref{3.2.b}
to eliminate the second and third operations we obtain
  \allowdisplaybreaks
  \begin{align}
  &
  \langle  a,  \langle  b, c, d  \rangle , e  \rangle  
  \equiv 
  -  
  \langle  a,  \langle  c, b, d  \rangle , e  \rangle ,
  \label{1.2.aa}
  \\
  &
  \langle  a,  \langle  b, c, d  \rangle , e  \rangle  
  \equiv  
  \langle  a,  \langle  d, c, b  \rangle , e  \rangle  
  -  
  \langle  a,  \langle  d, b, c  \rangle , e  \rangle ,
  \label{1.2.bb}
  \\
  &
  \langle  a, b,  \langle  c, d, e  \rangle   \rangle  
  \equiv 
  -  
  \langle  a, b,  \langle  d, c, e  \rangle   \rangle ,
  \label{1.3.aa}
  \\
  &
  \langle  a, b,  \langle  c, d, e  \rangle   \rangle  
  \equiv  
  \langle  a, b,  \langle  e, d, c  \rangle   \rangle  
  -  
  \langle  a, b,  \langle  e, c, d  \rangle   \rangle ,
  \label{1.3.bb}
  \\
  &
  -  
  \langle  d,  \langle  a, b, c  \rangle , e  \rangle  
  \equiv  
  \langle  d,  \langle  b, a, c  \rangle , e  \rangle ,
  \label{2.1.aa}
  \\
  &
  -  
  \langle  d,  \langle  a, b, c  \rangle , e  \rangle  
  \equiv 
  -  
  \langle  d,  \langle  c, b, a  \rangle , e  \rangle  
  +  
  \langle  d,  \langle  c, a, b  \rangle , e  \rangle ,
  \label{2.1.bb}
  \\  
  &
  -  
  \langle  b, a,  \langle  c, d, e  \rangle   \rangle  
  \equiv  
  \langle  b, a,  \langle  d, c, e  \rangle   \rangle ,
  \label{2.3.aa}
  \\
  &
  -  
  \langle  b, a,  \langle  c, d, e  \rangle   \rangle  
  \equiv 
  -  
  \langle  b, a,  \langle  e, d, c  \rangle   \rangle  
  +  
  \langle  b, a,  \langle  e, c, d  \rangle   \rangle ,
  \label{2.3.bb}
  \\
  &
  \langle  e, d,  \langle  a, b, c  \rangle   \rangle  
  -  
  \langle  e,  \langle  a, b, c  \rangle , d  \rangle  
  \equiv 
  -  
  \langle  e, d,  \langle  b, a, c  \rangle   \rangle  
  +  
  \langle  e,  \langle  b, a, c  \rangle , d  \rangle ,
  \label{3.1.aa}
  \\
  &
  \langle  e, d,  \langle  a, b, c  \rangle   \rangle  
  -  
  \langle  e,  \langle  a, b, c  \rangle , d  \rangle 
  \equiv 
  \label{3.1.bb}
  \\
  &\quad
  \langle  e, d,  \langle  c, b, a  \rangle   \rangle  
  -  
  \langle  e,  \langle  c, b, a  \rangle , d  \rangle  
  -  
  \langle  e, d,  \langle  c, a, b  \rangle   \rangle  
  +  
  \langle  e,  \langle  c, a, b  \rangle , d  \rangle ,
  \notag
  \\  
  &
  \langle  e,  \langle  b, c, d  \rangle , a  \rangle  
  -  
  \langle  e, a,  \langle  b, c, d  \rangle   \rangle  
  \equiv 
  -  
  \langle  e,  \langle  c, b, d  \rangle , a  \rangle  
  +  
  \langle  e, a,  \langle  c, b, d  \rangle   \rangle ,
  \label{3.2.aa}
  \\
  &
  \langle  e,  \langle  b, c, d  \rangle , a  \rangle  
  -  
  \langle  e, a,  \langle  b, c, d  \rangle   \rangle 
  \equiv 
  \label{3.2.bb}
  \\
  &\quad
  \langle  e,  \langle  d, c, b  \rangle , a  \rangle  
  -  
  \langle  e, a,  \langle  d, c, b  \rangle   \rangle 
  -  
  \langle  e,  \langle  d, b, c  \rangle , a  \rangle  
  +  
  \langle  e, a,  \langle  d, b, c  \rangle   \rangle .
  \notag
  \end{align}
We rewrite \eqref{1.2.aa} and use it to rewrite \eqref{1.2.bb} as follows,
  \allowdisplaybreaks
  \begin{align}
  &
  \langle  a,  \langle  b, c, d  \rangle , e  \rangle  
  +  
  \langle  a,  \langle  c, b, d  \rangle , e  \rangle  
  \equiv 
  0,
  \label{2skew}
  \\
  &
  \langle  a,  \langle  b, c, d  \rangle , e  \rangle  
  +  
  \langle  a,  \langle  c, d, b  \rangle , e  \rangle  
  +  
  \langle  a,  \langle  d, b, c  \rangle , e  \rangle  
  \equiv 
  0.
  \label{2cyclic}
  \end{align}
Similarly \eqref{1.3.aa} and \eqref{1.3.bb} become,
  \allowdisplaybreaks
  \begin{align}
  &
  \langle  a, b,  \langle  c, d, e  \rangle   \rangle  
  +  
  \langle  a, b,  \langle  d, c, e  \rangle   \rangle  
  \equiv 
  0,
  \label{3skew}
  \\
  &
  \langle  a, b,  \langle  c, d, e  \rangle   \rangle  
  +  
  \langle  a, b,  \langle  d, e, c  \rangle   \rangle  
  +  
  \langle  a, b,  \langle  e, c, d  \rangle   \rangle  
  \equiv 
  0.
  \label{3cyclic}
  \end{align}
We remark that \eqref{2skew}--\eqref{3cyclic} show that the inner triple in a monomial of 
the second or third association types,
$\langle -, \langle -, -, - \rangle, - \rangle$ 
and 
$\langle -, -, \langle -, -, - \rangle \rangle$,
has properties analogous to identities \eqref{skew} and \eqref{cyclic}.
It is clear that \eqref{2.1.aa}--\eqref{2.3.bb} are equivalent to \eqref{2skew}--\eqref{3cyclic}.
We note that \eqref{3skew}--\eqref{3cyclic} are immediate consequences of \eqref{derivation1b}.
We have reduced the output of the KP algorithm to identities 
\eqref{derivation1b}--\eqref{derivation5b} and \eqref{2skew}--\eqref{2cyclic}. 
We show that \eqref{derivation5b} is redundant.
Applying \eqref{2skew}--\eqref{3cyclic} to identity \eqref{derivation5b} we obtain
  \begin{align*}
  &
  \big( \,
  \langle \langle e, d, c \rangle, b, a \rangle - \langle \langle e, d, c \rangle, a, b \rangle - \langle \langle e, b, a \rangle, d, c \rangle + \langle \langle e, a, b \rangle, d, c \rangle 
  \\
  &\quad
  - \langle e, \langle a, b, d \rangle, c \rangle - \langle e, d, \langle a, b, c \rangle \rangle 
  \, \big)
  \\
  - \,
  &
  \big( \,
  \langle \langle e, c, d \rangle, b, a \rangle - \langle \langle e, c, d \rangle, a, b \rangle 
  - \langle \langle e, b, a \rangle, c, d \rangle + \langle \langle e, a, b \rangle, c, d \rangle 
  \\
  &\quad
  - \langle e, \langle a, b, c \rangle, d \rangle - \langle e, c, \langle a, b, d \rangle \rangle
  \, \big)
  \equiv 0,
  \end{align*}
which follows from \eqref{derivation3b}. 
This completes the proof of the following result.

\begin{theorem} \label{LTS}
Applying the KP algorithm to Lie triple systems produces the variety of ternary algebras with 
a trilinear operation $\langle -,-,- \rangle$ satisfying these identities:
\allowdisplaybreaks
  \begin{align}
  &
  \langle a, \langle b, c, d \rangle, e \rangle 
  + 
  \langle a, \langle c, b, d \rangle, e \rangle 
  \equiv 
  0,
  \label{LTS1} 
  \tag{LTS1}
  \\
  &
  \langle a, \langle b, c, d \rangle, e \rangle 
  + 
  \langle a, \langle c, d, b \rangle, e \rangle 
  + 
  \langle a, \langle d, b, c \rangle, e \rangle 
  \equiv 
  0,
  \label{LTS2} 
  \tag{LTS2}
  \\
  &
  \langle a, b, \langle c, d, e \rangle \rangle  
  - 
  \langle \langle a, b, c \rangle, d, e \rangle  
  + 
  \langle \langle a, b, d \rangle, c, e \rangle  
  - 
  \langle \langle a, b, e \rangle, d, c \rangle  
  \label{LTS-B} 
  \tag{LTS-B}  
  \\
  &\quad
  + 
  \langle \langle a, b, e\rangle , c, d\rangle 
  \equiv 0,
  \notag
  \\
  &
  \langle \langle c, d, e \rangle, b, a \rangle  
  - 
  \langle \langle c, d, e \rangle, a, b \rangle  
  - 
  \langle \langle c, b, a \rangle, d, e \rangle  
  + 
  \langle \langle c, a, b \rangle, d, e \rangle  
  \label{LTS3} 
  \tag{LTS3}
  \\
  &\quad
  - 
  \langle c, \langle a, b, d \rangle, e \rangle  
  - 
  \langle c, d, \langle a, b, e \rangle \rangle 
  \equiv 0.
  \notag
  \end{align}
\end{theorem}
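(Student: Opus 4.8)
The plan is to run the KP algorithm by hand and then reduce the large system it outputs to a minimal equivalent set of identities. First I would apply Part 1 to each defining identity of a Lie triple system, letting every indeterminate serve in turn as the central one. Because the operation is ternary, each monomial spawns three subscripted operations $[-,-,-]_1$, $[-,-,-]_2$, $[-,-,-]_3$; I expect the two degree-$3$ identities \eqref{skew} and \eqref{cyclic} to produce three outputs each and the degree-$5$ identity \eqref{derivation} to produce five, giving the $11$ identities \eqref{skew1}--\eqref{derivation5} listed above.

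The heart of the simplification is the elimination of the superfluous operations. The two skew outputs from \eqref{skew} should collapse to the single relation \eqref{reduce2} expressing the second operation through the first, and substituting this into the cyclic outputs should similarly yield \eqref{reduce3}, which expresses the third operation through the first. I would then substitute both \eqref{reduce2} and \eqref{reduce3} into the five derivation outputs; this is a mechanical but bulky relabelling, and along the way I would verify that the five outputs pair into equivalences, so that, writing $\langle -,-,-\rangle$ for $[-,-,-]_1$, only \eqref{derivation1b}, \eqref{derivation3b}, and \eqref{derivation5b} survive.

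Next I would generate the $12$ interchange identities \eqref{1.2.a}--\eqref{3.2.b} of Part 2 and again apply \eqref{reduce2}--\eqref{reduce3}. After rearrangement these should reduce to skew and cyclic conditions on the inner triple: \eqref{2skew} and \eqref{2cyclic} for the left-inner association type, together with \eqref{3skew} and \eqref{3cyclic} for the right-inner type. I would then record that the right-inner pair is an immediate consequence of \eqref{derivation1b}, so that only \eqref{2skew}--\eqref{2cyclic} need be kept.

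The final step, which I expect to be the main obstacle, is to show that \eqref{derivation5b} is redundant. Here I would apply the skew and cyclic rewriting rules \eqref{2skew}--\eqref{3cyclic} to its sixteen terms and argue that the result is a difference of two instances of \eqref{derivation3b}. Tracking the signs and argument permutations through this collapse is the delicate part, and it is where an error would be easiest to make; once it is done, renaming \eqref{2skew}, \eqref{2cyclic}, \eqref{derivation1b}, and \eqref{derivation3b} gives exactly the four stated identities \eqref{LTS1}, \eqref{LTS2}, \eqref{LTS-B}, and \eqref{LTS3}.
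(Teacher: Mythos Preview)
Your proposal is correct and follows essentially the same route as the paper's own argument: apply Part~1 to obtain the eleven identities, use the skew and cyclic outputs to derive \eqref{reduce2} and \eqref{reduce3} eliminating the second and third operations, reduce the five derivation outputs to \eqref{derivation1b}, \eqref{derivation3b}, \eqref{derivation5b}, process the twelve Part~2 identities down to \eqref{2skew}--\eqref{3cyclic}, observe that \eqref{3skew}--\eqref{3cyclic} follow from \eqref{derivation1b}, and finally show that \eqref{derivation5b} collapses under \eqref{2skew}--\eqref{3cyclic} to a difference of two instances of \eqref{derivation3b}. The only cosmetic difference is that the paper eliminates the second operation first and checks the pairwise equivalences among \eqref{derivation1a}--\eqref{derivation5a} before eliminating the third, rather than substituting both reductions simultaneously as you suggest; either order works.
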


We remark that \eqref{LTS-B} shows that monomials in the third association type 
$\langle -,-,\langle -,-,-\rangle \rangle$ 
can be expressed as linear combinations of monomials in the first association type 
$\langle \langle -,-,-\rangle ,-,- \rangle $. 
Moreover, in the last four terms of \eqref{LTS-B}, the signs and permutations of $c,d,e$ 
correspond to the expansion of the Lie triple product $-[[c,d],e]$ in an associative algebra. 
We therefore introduce an identity analogous to \eqref{LTS-B} 
but for the second association type $\langle -,\langle -,-,-\rangle,- \rangle$:
  \begin{align}
  &
  \langle a, \langle b, c, d \rangle, e \rangle  
  - \langle \langle a, b, c \rangle, d, e \rangle  
  + \langle \langle a, c, b \rangle, d, e \rangle  
  + \langle \langle a, d, b \rangle, c, e \rangle  
  \label{LTS-A} \tag{LTS-A}
  \\
  & 
  - \langle \langle a, d, c \rangle, b, e \rangle  
  \equiv 0.
  \notag
  \end{align}
This new identity expresses monomials in the second association type 
as linear combinations of monomials in the first association type;
in the last four terms of \eqref{LTS-A}, the signs and permutations of $b,c,d$ 
correspond to $-[[b,c],d]$.

\begin{lemma} \label{2identities}
\eqref{LTS-A}, \eqref{LTS-B} are equivalent to \eqref{LTS1}, \eqref{LTS2}, \eqref{LTS-B}, \eqref{LTS3}.
\end{lemma}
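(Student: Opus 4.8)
The plan is to prove the two implications separately. Since \eqref{LTS-B} occurs in both lists, the substance of the lemma is that \eqref{LTS-A} together with \eqref{LTS-B} yields \eqref{LTS1}, \eqref{LTS2}, \eqref{LTS3}, and conversely that \eqref{LTS3} together with \eqref{LTS-B} yields \eqref{LTS-A}. Throughout, the strategy is to use \eqref{LTS-A} and \eqref{LTS-B} to rewrite every second- and third-association-type monomial as a linear combination of first-type monomials $\langle \langle -,-,- \rangle, -, - \rangle$, and then to check cancellation among these by inspection.

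For the forward direction I would first read \eqref{LTS-A} as the statement that the second-type term $\langle a, \langle b, c, d \rangle, e \rangle$ equals the four-term first-type combination $R(b,c,d)$ on its right-hand side. A direct check shows that $R$ is antisymmetric under the transposition $b \leftrightarrow c$, so that $R(b,c,d) + R(c,b,d) \equiv 0$; substituting \eqref{LTS-A} into the left side of \eqref{LTS1} then gives \eqref{LTS1}. Likewise the cyclic sum $R(b,c,d) + R(c,d,b) + R(d,b,c)$ cancels monomial by monomial, which yields \eqref{LTS2}. Thus \eqref{LTS1} and \eqref{LTS2} are formal consequences of \eqref{LTS-A} alone.

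To obtain \eqref{LTS3} in the forward direction, and to obtain \eqref{LTS-A} in the reverse direction, I would work with the single relation that results from combining \eqref{LTS-A} and \eqref{LTS-B}. Concretely, in \eqref{LTS3} I would rewrite the second-type term $\langle c, \langle a, b, d \rangle, e \rangle$ using \eqref{LTS-A} and the third-type term $\langle c, d, \langle a, b, e \rangle \rangle$ using \eqref{LTS-B}; every resulting monomial is then of first type, and I expect the eight substituted terms to cancel against the four original first-type terms of \eqref{LTS3}, confirming the forward implication. For the reverse implication I would instead solve \eqref{LTS3} for $\langle c, \langle a, b, d \rangle, e \rangle$, eliminate the remaining third-type term $\langle c, d, \langle a, b, e \rangle \rangle$ by \eqref{LTS-B}, and observe that the two copies of $\langle \langle c, d, e \rangle, b, a \rangle$ and of $\langle \langle c, d, e \rangle, a, b \rangle$ cancel; after the relabeling $(c,a,b,d,e) \mapsto (a,b,c,d,e)$ the surviving four-term expression is exactly \eqref{LTS-A}.

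It is worth noting that the reverse derivation uses only \eqref{LTS3} and \eqref{LTS-B}, so \eqref{LTS1} and \eqref{LTS2} are in fact redundant in the second list. No conceptual difficulty arises anywhere; the computations are purely formal multilinear substitutions. The one place demanding care --- and the main, if routine, obstacle --- is the bookkeeping of the roughly half-dozen permuted first-type monomials together with their signs when verifying the cancellations underlying \eqref{LTS3} and the reverse recovery of \eqref{LTS-A}.
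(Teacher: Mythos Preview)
Your approach is correct and is essentially the same as the paper's: both arguments exhibit each identity in one list as an explicit linear combination of permuted instances of the identities in the other list. The paper records these combinations compactly as four equations among the left-hand sides $S_1,S_2,S_4,S_A,S_B$, while you describe the same substitutions in words; in particular your forward derivation of \eqref{LTS3} amounts to the paper's relation $S_4(a,b,c,d,e)=-S_A(c,a,b,d,e)-S_B(c,d,a,b,e)$, and reading that relation backwards gives your reverse derivation of \eqref{LTS-A}. Your side remark that \eqref{LTS1} and \eqref{LTS2} are redundant is a small bonus the paper's chosen combination $S_A=S_1+S_4(c,b,a,d,e)+S_B(a,d,c,b,e)$ does not make explicit, though it is immediate from the paper's third equation.
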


\begin{proof}
We write $S_1$, $S_2$, $S_4$, $S_A$, $S_B$ for the the left sides 
of identities \eqref{LTS1}, \eqref{LTS2}, \eqref{LTS3}, \eqref{LTS-A}, \eqref{LTS-B} respectively.
The equations  
  \begin{align*}
  S_1(a,b,c,d,e) &= S_A(a,b,c,d,e) + S_A(a,c,b,d,e),
  \\
  S_2(a,b,c,d,e) &= S_A(a,b,c,d,e) + S_A(a,d,b,c,e) + S_A(a,c,d,b,e),
  \\
  S_4(a,b,c,d,e) &= {} - S_A(c,a,b,d,e)- S_B(c,d,a,b,e) ,
  \\
  S_A(a,b,c,d,e) &= S_1(a,b,c,d,e) + S_4(c,b,a,d,e) + S_B(a,d,c,b,e),    
  \end{align*}
can be verified by direct calculation.
\end{proof}

\begin{definition} \label{definitionLTS}
A \textbf{Leibniz triple system} is a vector space $T$ with a trilinear operation $T \times T \times T \to T$ 
denoted $\langle -,-,-\rangle $ satisfying \eqref{LTS-A} and \eqref{LTS-B}:
  \allowdisplaybreaks
  \begin{align*}
  \langle a,\langle b,c,d\rangle,e\rangle & \equiv 
  \langle \langle a,b,c \rangle ,d,e \rangle - \langle \langle a,c,b\rangle,d,e\rangle - \langle \langle a,d,b   \rangle,c,e\rangle + \langle \langle a,d,c\rangle,b,e\rangle,
  \\ 
  \langle a,b,\langle c,d,e\rangle\rangle & \equiv 
  \langle \langle a,b,c\rangle,d,e\rangle - \langle \langle a,b,d\rangle,c,e\rangle - \langle \langle a,b,e\rangle,c,d\rangle + \langle \langle a,b,e\rangle,d,c\rangle.
  \end{align*}
\end{definition}

We remark that Lemma \ref{2identities} shows that Leibniz triple systems can also be characterized by 
the four identities of Theorem \ref{LTS}. 
We will use without further comment the most convenient characterization for our purposes. 

\begin{example} \label{Lie2LeibnizExample}
Let $T$ be a Lie triple system with product $[-,-,-]$. 
It is easy to check that $T$ is a Leibniz triple system.
If $a,b,c,d,e \in T$ then
  \begin{align*}
  &
  [a,b,[c,d,e]] - [[a,b,c],d,e] + [[a,b,d],c,e] - [[a,b,e],d,c] + [[a,b,e],c,d] \stackrel{\eqref{derivation}}{\equiv} 
  \\
  &
  [c,[a,b,d],e] + [c,d,[a,b,e]] + [[a,b,d],c,e] - [[a,b,e],d,c] + [[a,b,e],c,d] \stackrel{\eqref{skew}}{\equiv} 
  \\
  &
  [c,d,[a,b,e]] + [[a,b,e],c,d] + [d,[a,b,e],c] \stackrel{\eqref{cyclic}}{\equiv} 0,
  \end{align*}
which proves \eqref{LTS-B}, and the proof of \eqref{LTS-A} is similar way.
Hence an associative algebra gives a Leibniz triple system if we set
$\langle a,b,c \rangle = abc - bac - cab + cba$.
\end{example}

\begin{example}
Let $A$ be a differential associative algebra in the sense of Loday \cite{LodaySurvey}:
an associative algebra $A$ with a product $a \cdot b$ 
and a linear map $d\colon A \to A$ such that $d^2 = 0$ and 
$d(a \cdot b) = d(a) \cdot b + a \cdot d(b)$ for all $a, b \in A$.
One endows $A$ with a dialgebra structure by defining 
$a \dashv b = a \cdot d(b)$ and $a \vdash b = d(a) \cdot b$. 
Then $A$ becomes a Leibniz triple system (see Corollary \ref{corollarydialgebra}) if we set
  \[
  \langle a,b,c \rangle 
  = 
  a \cdot d(b) \cdot d(c) -  d(b) \cdot a \cdot d(c) - d(c) \cdot a \cdot d(b) + d(c) \cdot d(b) \cdot a.
  \]
\end{example}


\section{Operator identities for Leibniz triple systems} \label{sectionoperatoridentities}

In this section we present a more intuitive formulation of 
the defining identities for Leibniz triple systems.

\begin{definition}
Let $T$ be a triple system with product $\{-,-,-\}$.
For $a, b \in T$ we define two endomorphisms of $T$ as follows:
  \[
  L_{a,b}(x) =  \{a,b,x\},
  \qquad
  R_{a,b}(x) =  \{x,a,b\} - \{x,b,a\}.
  \]
Let $D$ be an endomorphism of $T$. 
We call $D$ a \textbf{ternary derivation} if it satisfies
  \[
  D\big( \{a,b,c\} \big) 
  \equiv
  \{D(a),b,c\}
  +
  \{a,D(b),c\}
  +
  \{a,b,D(c)\}.
  \]
We call $D$ a \textbf{Lie triple derivation} if it satisfies
  \[
  D\big( \{a,b,c\} \big) 
  \equiv
  \{D(a),b,c\}
  -
  \{D(b),a,c\}
  -
  \{D(c),a,b\}
  +
  \{D(c),b,a\}.
  \]
The right side has the form of the expansion of the Lie triple product $[[a,b],c]$
using the Lie bracket $[a,b] = ab - ba$; this motivates our choice of terminology.
\end{definition}

\begin{corollary}
Every Leibniz triple system satisfies the following identities:
  \begin{align}
  R_{a,b}( \langle c,d,e \rangle)
  &\equiv
  \langle R_{a,b}(c), d, e \rangle 
  + \langle c, R_{a,b}(d), e \rangle 
  + \langle c, d, R_{a,b}(e) \rangle,
  \label{Rab} \tag{OP1}
  \\
  L_{a,b}( \langle c,d,e \rangle)
  &\equiv
  \langle L_{a,b}(c), d, e \rangle 
  - \langle L_{a,b}(d), c, e \rangle 
  - \langle L_{a,b}(e),c,d \rangle 
  \label{Lab} \tag{OP2}
  \\
  &\quad
  + \langle L_{a,b}(e),d,c \rangle,
  \notag
  \\
  [ R_{a,b}, R_{c,d} ]
  &\equiv
  R_{R_{a,b}(c),d}
  -
  R_{R_{a,b}(d),c},
  \label{bracket R} \tag{OP3}
  \\
  [ R_{c,d}, L_{a,b} ]
  &\equiv
  L_{L_{a,b}(c),d}
  -
  L_{L_{a,b}(d),c}.
  \label{bracket of R and L} \tag{OP4}
  \end{align}
In particular, $R_{ab}$ is a derivation and $L_{ab}$ is a Lie triple derivation.
\end{corollary}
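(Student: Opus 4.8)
The plan is to handle the four operator identities in two waves: first recognize that \eqref{Rab} and \eqref{Lab} are essentially rewritings of the defining identities, and then obtain \eqref{bracket R} and \eqref{bracket of R and L} as purely formal operator consequences of \eqref{Rab} and \eqref{Lab}. The final clause of the statement then needs no separate argument, since ``$R_{a,b}$ is a derivation'' is precisely \eqref{Rab} and ``$L_{a,b}$ is a Lie triple derivation'' is precisely \eqref{Lab}.

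For \eqref{Lab} I would expand both sides using $L_{a,b}(x)=\langle a,b,x\rangle$. The left side becomes $\langle a,b,\langle c,d,e\rangle\rangle$ and the right side becomes $\langle\langle a,b,c\rangle,d,e\rangle-\langle\langle a,b,d\rangle,c,e\rangle-\langle\langle a,b,e\rangle,c,d\rangle+\langle\langle a,b,e\rangle,d,c\rangle$, which is exactly \eqref{LTS-B}; so \eqref{Lab} holds with no further work. For \eqref{Rab} I would expand using $R_{a,b}(x)=\langle x,a,b\rangle-\langle x,b,a\rangle$, so that the left side is $\langle\langle c,d,e\rangle,a,b\rangle-\langle\langle c,d,e\rangle,b,a\rangle$. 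Rewriting this by \eqref{LTS3} turns it into $\langle\langle c,a,b\rangle,d,e\rangle-\langle\langle c,b,a\rangle,d,e\rangle-\langle c,\langle a,b,d\rangle,e\rangle-\langle c,d,\langle a,b,e\rangle\rangle$. The two first-association terms already agree with the corresponding terms in the expanded right side, so it remains only to check the two ``inner symmetry'' statements $-\langle c,\langle a,b,d\rangle,e\rangle\equiv\langle c,\langle d,a,b\rangle,e\rangle-\langle c,\langle d,b,a\rangle,e\rangle$ and $-\langle c,d,\langle a,b,e\rangle\rangle\equiv\langle c,d,\langle e,a,b\rangle\rangle-\langle c,d,\langle e,b,a\rangle\rangle$. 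The first follows from \eqref{LTS2} (cyclic on the inner triple) followed by \eqref{LTS1} (skew on the inner triple); the second follows the same way from \eqref{3cyclic} and \eqref{3skew}, which are themselves consequences of \eqref{LTS-B}. This short reduction, where the defining identities are actually consumed, is the main obstacle.

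For \eqref{bracket R} and \eqref{bracket of R and L} I would argue at the operator level rather than expand into monomials. Having \eqref{Rab}, the map $R_{a,b}$ is a derivation of the triple product; applying it to $R_{c,d}(x)=\langle x,c,d\rangle-\langle x,d,c\rangle$ and using the derivation rule, the terms in which $R_{a,b}$ differentiates the first slot reassemble into $R_{c,d}\big(R_{a,b}(x)\big)$, while the terms acting on the second and third slots reassemble into $R_{R_{a,b}(c),d}(x)$ and $-R_{R_{a,b}(d),c}(x)$; transposing the first summand yields \eqref{bracket R}. Identity \eqref{bracket of R and L} follows the same pattern from \eqref{Lab}: applying the Lie triple derivation $L_{a,b}$ to $R_{c,d}(x)$, the first-slot contributions rebuild $R_{c,d}\big(L_{a,b}(x)\big)$ while the remaining cross terms cancel in pairs, leaving exactly $\langle L_{a,b}(c),d,x\rangle-\langle L_{a,b}(d),c,x\rangle=L_{L_{a,b}(c),d}(x)-L_{L_{a,b}(d),c}(x)$. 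Since everything here is driven by the derivation rules \eqref{Rab} and \eqref{Lab}, no additional identity manipulation is required in this last step.
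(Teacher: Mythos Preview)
Your proof is correct, and for \eqref{Rab} and \eqref{Lab} it coincides with the paper's argument almost verbatim: the paper also obtains \eqref{Rab} by rewriting \eqref{LTS3} and then invoking \eqref{LTS1}--\eqref{LTS2} and \eqref{3skew}--\eqref{3cyclic} on the two inner triples, and it identifies \eqref{Lab} directly with a defining identity (the paper writes \eqref{LTS-A}, but the expansion you give, which is \eqref{LTS-B}, is the correct one).

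Where your argument genuinely diverges is in \eqref{bracket R} and \eqref{bracket of R and L}. The paper does \emph{not} deduce these from \eqref{Rab} and \eqref{Lab}; instead it goes back to monomial identities: \eqref{bracket R} is read off from the sixteen-term relation \eqref{derivation5b}, and \eqref{bracket of R and L} is obtained by rewriting \eqref{LTS-B} and then applying \eqref{3skew}--\eqref{3cyclic} to the third-association term. Your route is cleaner and more self-contained: once \eqref{Rab} makes $R_{a,b}$ a ternary derivation, applying it to $R_{c,d}(x)=\langle x,c,d\rangle-\langle x,d,c\rangle$ immediately yields \eqref{bracket R}, and similarly the Lie-triple-derivation form \eqref{Lab} applied to $R_{c,d}(x)$ produces the needed cancellation of the four ``middle'' cross terms and gives \eqref{bracket of R and L}. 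This avoids any appeal to \eqref{derivation5b} (which in the paper required a separate reduction) and makes transparent that \eqref{bracket R}--\eqref{bracket of R and L} are formal consequences of the derivation properties \eqref{Rab}--\eqref{Lab}. The only cost is that one must track signs carefully when passing from $L_{a,b}\circ R_{c,d}-R_{c,d}\circ L_{a,b}$ to $[R_{c,d},L_{a,b}]$; your final expression is correct, but in a written version you should display the one-line cancellation explicitly rather than assert it.
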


\begin{proof}
We rewrite \eqref{LTS3} as 
  \begin{align*}
  \langle \langle c, d, e \rangle, a, b \rangle  
  - \langle \langle c, d, e \rangle, b, a \rangle  
  &\equiv  
  \langle \langle c, a, b \rangle, d, e \rangle  
  - \langle \langle c, b, a \rangle, d, e \rangle  
  - \langle c, \langle a, b, d \rangle, e \rangle
  \\
  &\quad  
  - \langle c, d, \langle a, b, e \rangle \rangle,
\end{align*}
and note that 
  \begin{align*}
  \langle c,\langle a, b, d \rangle ,e \rangle 
  &\stackrel{\eqref{LTS2}}{\equiv} 
  - \langle c, \langle d, a, b \rangle, e \rangle 
  - \langle c, \langle b, d, a \rangle, e \rangle
  \\
  &\stackrel{\eqref{LTS1}}{\equiv} 
  - \langle c, \langle d, a, b \rangle, e \rangle 
  + \langle c, \langle d, b, a \rangle, e \rangle, 
  \\
  \langle c, d, \langle a, b, e \rangle \rangle 
  &\stackrel{\eqref{3cyclic}}{\equiv} 
  - \langle c, d, \langle e, a, b \rangle \rangle 
  - \langle c, d, \langle b, e, a \rangle \rangle 
  \\
  &\stackrel{\eqref{3skew}}{\equiv} 
  - \langle c, d, \langle e, a, b \rangle \rangle
  + \langle c, d, \langle e, b, a \rangle \rangle.
  \end{align*}
Combining these equations gives \eqref{Rab}, and \eqref{Lab} is equivalent to \eqref{LTS-A}.
Identity \eqref{bracket R} follows directly from \eqref{derivation5b}. 
We rewrite identity \eqref{LTS-B} as
  \begin{align*}
  \langle  \langle a, b, c \rangle, d, e \rangle 
  -  \langle  \langle a, b, d \rangle, c, e \rangle  
  &\equiv
  \langle  \langle a, b, e \rangle, c, d \rangle 
  -  \langle  \langle a, b, e \rangle, d, c \rangle 
  +  \langle a, b,  \langle   c, d, e \rangle \rangle. 
  \end{align*}
The left side equals
  \[
  L_{L_{a,b}(c),d}(e) - L_{L_{a,b}(d),c}(e).
  \]
Applying \eqref{3skew}--\eqref{3cyclic} to $\langle a, b, \langle c, d, e \rangle \rangle$
the right side becomes
  \[
  \langle  \langle a, b, e \rangle, c, d \rangle 
  -  \langle  \langle a, b, e \rangle, d, c \rangle 
  -  \langle a, b,  \langle e, c, d \rangle \rangle 
  + \langle a, b,  \langle e, d, c \rangle \rangle,
  \]
which equals
  \[
  ( R_{c,d} \circ L_{a,b} ) (e)
  -
  ( L_{a,b} \circ R_{c,d} ) (e)
  =
  [ R_{c,d}, L_{a,b} ] (e).
  \]
This proves \eqref{bracket of R and L}.
\end{proof}


\section{Leibniz triple systems from Leibniz algebras} \label{sectionfreeleibnizalgebras}

In this section we recall from Loday and Pirashvili \cite{LodayPirashvili} 
(see also Loday \cite{LodaySurvey}) the structure of free Leibniz algebras.
From this we obtain a large class of examples of Leibniz triple systems.

Let $V$ be a vector space over a field $F$.
For $m \ge 1$ we consider the $m$-th tensor power $V^{\otimes m} = V \otimes_F \cdots \otimes_F V$ ($m$ factors)
which is spanned by simple tensors $v_1 \otimes \cdots \otimes v_m$.
The (non-unital) tensor algebra of $V$ is 
  \[
  A(V) 
  =
  \bigoplus_{m \ge 1} V^{\otimes m},
  \]
with associative multiplication defined on simple tensors by concatenation,
  \[
  ( v_1 \otimes \cdots \otimes v_m )
  ( v_{m+1} \otimes \cdots \otimes v_{m+n} )
  =
  v_1 \otimes \cdots \otimes v_{m+n},
  \]
and extended bilinearly.
We make $A(V)$ into a Leibniz algebra in which the product, denoted $x \cdot y$,
is defined to be the unique Leibniz product for which 
  \begin{equation} \label{leftfactordegree1}
  ( v_1 \otimes \cdots \otimes v_m ) \cdot v_{m+1}
  =
  v_1 \otimes \cdots \otimes v_m \otimes v_{m+1}.
  \end{equation}
That is, we define $x \cdot y$ inductively on the degree $n$ of $y$, 
using \eqref{leftfactordegree1} for $n = 1$ and the following equation for $n \ge 2$:
  \begin{align}
  &
  ( v_1 \otimes \cdots \otimes v_m ) \cdot ( v_{m+1} \otimes \cdots \otimes v_{m+n} )
  =
  \label{leftfactordegreen}
  \\
  &
  v_1 \otimes \cdots \otimes v_{m+n}
  -
  ( v_1 \otimes \cdots \otimes v_m \otimes v_{m+n} )
  \cdot
  ( v_{m+1} \otimes \cdots \otimes v_{m+n-1} ).
  \notag
  \end{align}
If we write
  \[
  x = v_1 \otimes \cdots \otimes v_m,
  \quad
  y = v_{m+1} \otimes \cdots \otimes v_{m+n-1},
  \quad
  z = v_{m+n}.
  \]
then \eqref{leftfactordegreen} expresses the Leibniz identity in the form
  \[
  x \cdot ( y \cdot z ) = ( x \cdot y ) \cdot z - ( x \cdot z ) \cdot y.
  \]
This inductive definition shows that simple tensors 
correspond to left-normalized Leibniz products:
  \[
  v_1 \otimes v_2 \cdots \otimes v_m
  =
  ( \text{---} \, ( ( v_1 \cdot v_2 ) \cdot v_3 ) \, \text{---} \cdot v_{m-1} ) \cdot v_m.
  \]
We may therefore omit the tensor symbols, since we only need one association type 
in each degree.
Roughly speaking, right (left) multiplication by an element of $V$ makes the left (right) factor 
an associative product (left-normalized Lie product).

\begin{example} \label{example_LB_Monomials}
For $a, b, c, d \in V$, we have
  \allowdisplaybreaks 
  \begin{align*} 
  &
  a \cdot b 
  = 
  a b,
  \\
  &
  a b \cdot c 
  = 
  a b c,
  \quad
  a \cdot b c 
  = 
  a b c - a c b,
  \\
  &
  a b c \cdot d 
  = 
  a b c d,
  \quad
  a b \cdot c d 
  =
  a b c d - a b d c,
  \quad
  a \cdot b c d
  =
  a b c d
  -
  a c b d
  -
  a d b c
  -
  a d c b.  
  \end{align*}
  \end{example}

\begin{definition}
We write $A(V)_L$ for the vector space $A(V)$ with the Leibniz product defined by equation \eqref{leftfactordegree1}. 
\end{definition}

\begin{theorem} \emph{(Loday and Pirashvili \cite{LodayPirashvili})}
The Leibniz algebra $A(V)_L$ is the free Leibniz algebra on the vector space $V$.
\end{theorem}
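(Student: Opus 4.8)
The plan is to establish the universal property that characterizes the free Leibniz algebra: for every Leibniz algebra $L$ and every linear map $f \colon V \to L$, there is a unique Leibniz algebra homomorphism $\tilde f \colon A(V)_L \to L$ extending $f$ along the inclusion $V = V^{\otimes 1} \hookrightarrow A(V)$. Before addressing this, one must confirm that the product defined inductively by \eqref{leftfactordegree1} and \eqref{leftfactordegreen} really is a Leibniz product. The inductive rule only builds in the instance of the Leibniz identity $x\cdot(y\cdot z)=(x\cdot y)\cdot z-(x\cdot z)\cdot y$ in which the last factor $z$ has degree $1$, so the full identity, for $z$ of arbitrary degree, must be verified separately. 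I would prove it by induction on $\deg z$: writing $z = z' \cdot w$ with $\deg w = 1$, one expands both sides using the degree-$1$ case together with the induction hypothesis and checks that the resulting tensor expressions agree. I expect this verification to be the main obstacle, since it is the only place where the Leibniz identity is not simply read off from the definition.

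Granting that $A(V)_L$ is a Leibniz algebra, uniqueness of $\tilde f$ is immediate. The excerpt shows that every simple tensor is a left-normalized product
\[
v_1 \otimes \cdots \otimes v_m = (\cdots((v_1 \cdot v_2)\cdot v_3)\cdots)\cdot v_m,
\]
and such tensors span $A(V)$, so $V$ generates $A(V)_L$ as a Leibniz algebra. Any homomorphism extending $f$ is therefore forced to satisfy
\[
\tilde f(v_1 \otimes \cdots \otimes v_m) = \langle \cdots \langle \langle f(v_1), f(v_2) \rangle, f(v_3) \rangle \cdots, f(v_m) \rangle,
\]
which pins down $\tilde f$ on a basis.

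For existence I would take the displayed formula as the definition of $\tilde f$ (well defined, as simple tensors form a basis of $A(V)$) and verify that it respects the product, $\tilde f(x \cdot y) = \langle \tilde f(x), \tilde f(y) \rangle$, by induction on $\deg y$. When $\deg y = 1$ this is exactly \eqref{leftfactordegree1} combined with the definition of $\tilde f$. For $\deg y \ge 2$, write $y = y' \cdot z$ with $\deg z = 1$ and apply the Leibniz identity \eqref{leftfactordegreen} in $A(V)_L$ to obtain $x \cdot y = (x \cdot y')\cdot z - (x \cdot z)\cdot y'$. Applying $\tilde f$ and invoking the induction hypothesis on the lower-degree right factors together with the degree-$1$ base case reduces $\tilde f(x \cdot y)$ to
\[
\langle\langle \tilde f(x), \tilde f(y')\rangle, \tilde f(z)\rangle - \langle\langle \tilde f(x), \tilde f(z)\rangle, \tilde f(y')\rangle,
\]
which the Leibniz identity in $L$ collapses to $\langle \tilde f(x), \langle \tilde f(y'), \tilde f(z)\rangle\rangle = \langle \tilde f(x), \tilde f(y)\rangle$, the last equality again by the base case. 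This closes the induction and completes the verification of the universal property, so $A(V)_L$ is the free Leibniz algebra on $V$.
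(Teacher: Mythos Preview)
Your argument is correct and complete in outline: the inductive verification of the Leibniz identity (inducting on $\deg z$, reducing via $z = z' \cdot w$ with $\deg w = 1$) goes through exactly as you sketch, and the existence/uniqueness of $\tilde f$ follows by the induction on $\deg y$ you describe. There is, however, nothing to compare against in this paper: the theorem is stated with attribution to Loday and Pirashvili \cite{LodayPirashvili} and no proof is given here. Your approach is in fact the standard one, essentially that of the original reference.
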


The iterated Lie bracket $[[-,-],-]$ in a Lie algebra satisfies
the defining identities for Lie triple systems.
An analogous result holds in the Leibniz setting.

\begin{proposition}
Any subspace of a Leibniz algebra with product $\langle -, -\rangle$ which is closed under 
the trilinear operation $\langle \langle -, - \rangle, - \rangle$ is a Leibniz triple system.
\end{proposition}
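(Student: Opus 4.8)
The plan is to verify the two defining identities \eqref{LTS-A} and \eqref{LTS-B} directly by substituting the trilinear operation $\langle a,b,c\rangle = \langle\langle a,b\rangle, c\rangle$ (the iterated Leibniz bracket) and then reducing everything to consequences of the right Leibniz identity \eqref{Rightleibnizidentity} and its companion right anticommutativity \eqref{rightanticommutativity}. Since Definition \ref{definitionLTS} characterizes Leibniz triple systems by exactly these two identities, it suffices to check them; closure under the operation is assumed, so the only content is the identity verification.

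First I would set up notation, writing the bracket as $\langle -,-\rangle$ and expanding each ternary term $\langle\langle x,y,z\rangle, u, v\rangle$ and $\langle x,y,\langle u,v,w\rangle\rangle$ into iterated binary brackets. For \eqref{LTS-B} the left-hand side, after substitution, becomes a $\mathbb{Z}$-linear combination of fully left-normalized and partly right-nested binary monomials in $a,b,c,d,e$. I would repeatedly apply \eqref{Rightleibnizidentity} in the form $\langle\langle x,y\rangle,z\rangle = \langle\langle x,z\rangle,y\rangle + \langle x,\langle y,z\rangle\rangle$ to push all brackets into left-normalized form, and use \eqref{rightanticommutativity} $\langle x,\langle y,z\rangle\rangle + \langle x,\langle z,y\rangle\rangle \equiv 0$ to handle the inner right-nested factors. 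I expect the resulting binary expressions to cancel in pairs, yielding $0$. The verification of \eqref{LTS-A} would proceed in parallel, with the permutations and signs of $b,c,d$ matching the expansion of $-[[b,c],d]$, exactly as flagged in the remark following Theorem \ref{LTS}.

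The main obstacle will be the bookkeeping: each ternary monomial expands into several binary monomials, and \eqref{LTS-B} alone has five ternary terms, so one faces a moderately large signed sum whose vanishing is not visually obvious. The cleanest route is likely to exploit the free-Leibniz structure recalled above: by the theorem of Loday and Pirashvili, $A(V)_L$ is the free Leibniz algebra, so it suffices to verify the identities on left-normalized simple-tensor generators, where the binary products are computed explicitly (Example \ref{example_LB_Monomials}). Checking the identity in $A(V)_L$ for generic $a,b,c,d,e \in V$ reduces each side to a linear combination of simple tensors $v_{\sigma(1)}\otimes\cdots$, and equality of such combinations is a finite, mechanical comparison of coefficients. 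Since any Leibniz algebra is a quotient of a free one, an identity that holds in $A(V)_L$ holds universally, which completes the argument.

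Alternatively, and perhaps more transparently, I would invoke the universal property differently: it is enough to observe that the iterated bracket satisfies the operator identities \eqref{Rab} and \eqref{Lab}, since in a Leibniz algebra right multiplication is a derivation and the iterated left multiplication behaves as a Lie triple derivation of the bracket. Establishing these two operator statements is a short computation from \eqref{Rightleibnizidentity} and \eqref{rightanticommutativity}, and by the Corollary in Section \ref{sectionoperatoridentities} they are equivalent to the defining identities \eqref{LTS-A}, \eqref{LTS-B}. Whichever route is taken, the heart of the matter is the single clean reduction of the derivation-type identity \eqref{LTS-B} to the Leibniz identity; once that is in place, \eqref{LTS-A} follows by the same method, and the proposition is proved.
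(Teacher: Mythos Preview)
Your primary approach is correct and is essentially the paper's own proof: reduce to the free Leibniz algebra via the quotient argument, use the left-normalized normal form so that $\langle\langle a,b,c\rangle,d,e\rangle = (((ab)c)d)e$ already, and expand the other two association types by the Leibniz identity to obtain exactly the right-hand sides of \eqref{LTS-A} and \eqref{LTS-B}. (In fact you will find that \eqref{rightanticommutativity} is not needed; the Leibniz identity alone does the reduction.)

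Your alternative route through the operator identities, however, has a gap. The Corollary in Section~\ref{sectionoperatoridentities} only shows that every Leibniz triple system satisfies \eqref{Rab}--\eqref{bracket of R and L}; it does not establish the converse. The proof there does remark that \eqref{Lab} is equivalent to \eqref{LTS-A}, but \eqref{Rab} is derived from \eqref{LTS3} together with \eqref{LTS1}, \eqref{LTS2}, \eqref{3skew}, \eqref{3cyclic}, and no claim is made that \eqref{Rab} alone (or together with \eqref{Lab}) implies \eqref{LTS-B}. So verifying \eqref{Rab} and \eqref{Lab} for the iterated bracket would not, by that Corollary, yield \eqref{LTS-B}. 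Stick with your first plan.
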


\begin{proof}
It suffices to verify that identities \eqref{LTS-A} and \eqref{LTS-B} are satisfied by the operation 
$\langle a,b,c\rangle = \langle \langle a, b\rangle, c\rangle$.
Since every Leibniz algebra is a quotient of a free Leibniz algebra, we only need to prove the claim for free
Leibniz algebras.
The existence of a normal form for basis monomials in the free Leibniz algebra implies that we can reduce the
proof to a straightforward computation.
We first note that any ternary monomial in degree 5 in the first association type is already in normal form as a monomial
in the free Leibniz algebra: 
  \[
  \langle \langle a,b,c \rangle, d,e \rangle = (((ab)c)d)e.
  \]
Applying the Leibniz identity, for \eqref{LTS-A} and \eqref{LTS-B} we have
  \begin{align*}
  \langle a, b, \langle c, d, e \rangle \rangle
  &=
  (ab)((cd)e)
  =
  ((ab)(cd))e - ((ab)e)(cd)
  \\
  &=
  (((ab)c)d)e - (((ab)d)c)e - (((ab)e)c)d + (((ab)e)d)c
  \\
  &=
  \langle \langle a,b,c \rangle ,d,e\rangle 
  - \langle \langle a,b,d\rangle,c,e\rangle 
  - \langle \langle a,b,e\rangle,c,d\rangle 
  + \langle \langle a,b,e\rangle,d,c\rangle,
  \\
  \langle a, \langle b, c, d \rangle, e \rangle
  &=
  (a((bc)d))e
  =
  ((a(bc))d)e - ((ad)(bc))e
  \\
  &=
  (((ab)c)d)e - (((ac)b)d)e - (((ad)b)c)e + (((ad)c)b)e
  \\
  &=
  \langle \langle a,b,c \rangle,d,e \rangle 
  - \langle \langle a,c,b\rangle,d,e\rangle 
  - \langle \langle a,d,b\rangle,c,e\rangle 
  + \langle \langle a,d,c\rangle,b,e\rangle.
  \end{align*}
This completes the proof.
\end{proof}

\begin{corollary} \label{corollarydialgebra}
A subspace of an associative dialgebra is a Leibniz triple system if it is closed under the trilinear operation
  \[
  \langle a, b, c \rangle
  =
  a \dashv b \dashv c - b \vdash a \dashv c - c \vdash a \dashv b + c \vdash b \vdash a.
  \]
\end{corollary}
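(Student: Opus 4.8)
The plan is to reduce this corollary to the preceding Proposition by recognizing the displayed trilinear operation as the iterated Leibniz bracket of the associative dialgebra. By Definition \ref{definitiondialgebra}, the dialgebra $A$ becomes a Leibniz algebra under the bracket $\langle a, b \rangle = a \dashv b - b \vdash a$. Thus it suffices to establish the identity
\[
\langle \langle a, b \rangle, c \rangle
=
a \dashv b \dashv c - b \vdash a \dashv c - c \vdash a \dashv b + c \vdash b \vdash a,
\]
for then a subspace closed under the right-hand operation is exactly a subspace closed under $\langle \langle -, - \rangle, - \rangle$, and the Proposition delivers the conclusion at once.

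First I would expand the left side straight from the definition of the bracket,
\[
\langle \langle a, b \rangle, c \rangle
=
(a \dashv b) \dashv c - (b \vdash a) \dashv c - c \vdash (a \dashv b) + c \vdash (b \vdash a),
\]
obtained by substituting $\langle a,b \rangle = a \dashv b - b \vdash a$ into $\langle x, c \rangle = x \dashv c - c \vdash x$ and expanding bilinearly. Then I would normalize each of the four terms using a single dialgebra axiom: left associativity rewrites $(a \dashv b) \dashv c$ as $a \dashv b \dashv c$; inner associativity rewrites $(b \vdash a) \dashv c$ and $c \vdash (a \dashv b)$ as $b \vdash a \dashv c$ and $c \vdash a \dashv b$; and right associativity rewrites $c \vdash (b \vdash a)$ as $c \vdash b \vdash a$. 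This produces precisely the operation in the statement. Finally, with this identity in hand, I would invoke the Proposition above: since the trilinear product of the corollary coincides with $\langle \langle -, - \rangle, - \rangle$ for the Leibniz bracket on $A$, any subspace of $A$ closed under it is a Leibniz triple system.

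The computation has no real obstacle; it is purely a matter of matching each of the four mixed monomials with the correct associativity axiom. Two points are worth noting. First, the bar identities of Definition \ref{definitiondialgebra} play no role in this matching — only left, right, and inner associativity intervene — which is exactly what makes the four monomials $a \dashv b \dashv c$, $b \vdash a \dashv c$, $c \vdash a \dashv b$, $c \vdash b \vdash a$ unambiguous as written. Second, the Proposition requires closure only under the ternary operation, not under the binary bracket, so no separate argument about subalgebra structure is needed.
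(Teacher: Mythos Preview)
Your proposal is correct and follows the same route as the paper: recognize the displayed trilinear operation as the iterated Leibniz bracket $\langle\langle a,b\rangle,c\rangle$ for $\langle a,b\rangle = a\dashv b - b\vdash a$, and then invoke the preceding Proposition. Your version is in fact more careful than the paper's one-line proof, which states ``any such subspace is a Leibniz algebra'' --- a slight imprecision that you rightly avoid by observing that only the ambient dialgebra is the Leibniz algebra, and that closure under the ternary operation (not the binary bracket) is all the Proposition requires.
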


\begin{proof}
Any such subspace is a Leibniz algebra with $\langle a, b \rangle = a \dashv b - b \vdash a$.
\end{proof}


\section{Leibniz triple systems from Jordan dialgebras} \label{sectionjordandialgebras}

In this section we prove that the permuted associator in a Jordan dialgebra
satifies the defining identities for Leibniz triple systems.
Thus any subspace of a Jordan dialgebra which is closed under the associator
becomes a Leibniz triple system.
This generalizes the classical result that the associator in a Jordan algebra satisfies
the defining identities for Lie triple systems.

\begin{definition} \label{JDdefinition}
(Kolesnikov \cite{Kolesnikov}, Vel\'asquez and Felipe \cite{VelasquezFelipe1}, Bremner \cite{Bremner})
Over a field of characteristic not 2 or 3, a \textbf{(right) Jordan dialgebra} 
is a vector space with a bilinear operation $ab$, 
satisfying these polynomial identities:
  \begin{alignat*}{2}
  &\textbf{right commutativity:}
  &\qquad
  a(bc) &\equiv a(cb),
  \\
  &\textbf{right Jordan identity:}
  &\qquad
  ( b a^2 ) a &\equiv ( b a ) a^2,
  \\
  &\textbf{right Osborn identity:}
  &\qquad
  ( a, b, c^2 ) &\equiv 2 ( a c, b, c ).
  \end{alignat*}
(Algebras satisfying the last identity were systematically studied by Osborn \cite{Osborn}.)
\end{definition}

\begin{lemma}
The linearized forms of the right Jordan and Osborn identities are
  \begin{align}
  &
  RJ(a,b,c,d) = 
  \label{RJidentity}
  \\
  &\quad 
  (d(ab))c + (d(ac))b + (d(bc))a - (da)(bc) - (db)(ac) - (dc)(ab),
  \notag
  \\
  &
  RO(a,b,c,d) = 
  \label{ROidentity}
  \\
  &
  \quad
  ((ac)b)d + ((ad)b)c - (ab)(cd) - (ac)(bd) - (ad)(bc) + a((cd)b).
  \notag
  \end{align}
\end{lemma}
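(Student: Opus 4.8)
The plan is to derive both identities by standard multilinearization (polarization), using right commutativity to collapse the redundant terms and the hypothesis on the characteristic to divide out the resulting scalar factors. Recall that the associator is $(x,y,z) = (xy)z - x(yz)$, and that right commutativity $u(vw) \equiv u(wv)$ lets us reverse the two factors inside any right-hand product.

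For the right Jordan identity $(ba^2)a \equiv (ba)a^2$, the variable $b$ already occurs linearly, so I would treat it as a spectator (to be renamed $d$) and polarize the cubic variable $a$. Concretely, substitute $a \mapsto a + b + c$ and extract the component that is multilinear in the three new variables. In the term $(ba^2)a$ the square $a^2 = a \cdot a$ contributes two of the three variables and the outer factor the third, producing six monomials of the shape $(d(xy))z$ with $\{x,y,z\} = \{a,b,c\}$ and $xy$ ranging over both orders; right commutativity identifies $(d(xy))z$ with $(d(yx))z$, so each pair coalesces and a global factor of $2$ appears. The term $(ba)a^2$ is handled the same way, yielding $2\big[(da)(bc)+(db)(ac)+(dc)(ab)\big]$. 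Dividing the difference by $2$ gives exactly $RJ(a,b,c,d)$.

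For the right Osborn identity I would first expand $(a,b,c^2) - 2(ac,b,c)$ by the definition of the associator, obtaining $(ab)(c^2) - a(b(c^2)) - 2((ac)b)c + 2(ac)(bc)$. Here $a$ and $b$ are spectators and $c$ is quadratic, so I would polarize only $c$, substituting $c \mapsto c + d$ and keeping the part bilinear in $c$ and $d$. Each of the four terms contributes two monomials; right commutativity then rewrites $(ab)(dc)$ as $(ab)(cd)$ and $a(b(cd))$ first as $a(b(dc))$ and then as $a((cd)b)$, collapsing the eight monomials to six and exposing a common factor $-2$. Dividing by $-2$ produces $RO(a,b,c,d)$.

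The computations themselves are routine; the delicate points, and hence the main obstacle, are the bookkeeping of the polarization when the linearized variable sits in several nested positions (inside a square, as an outer multiplier, or split across a product as in $((ac)b)c$ versus $(ac)(bc)$), together with applying right commutativity at precisely the spots needed to reduce every monomial to the single representative appearing in the stated forms. The appearance of the scalar factors $2$ and $-2$ is exactly what forces the hypothesis that the characteristic is not $2$.
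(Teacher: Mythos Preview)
Your proposal is correct and follows exactly the standard linearization procedure that the paper invokes by citing Zhevlakov et al.; you have simply written out the details that the paper omits. The bookkeeping you describe (polarizing the cubic variable in the Jordan identity and the quadratic variable in the Osborn identity, then using right commutativity to halve the number of monomials and extract the factors $2$ and $-2$) is precisely what the computation requires, and your expansions check out term by term.
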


\begin{proof}
For a general discussion of linearization of polynomial identities in nonassociative algebras, 
see Chapter 1 of Zhevlakov et al.~\cite{Zhevlakov}.
\end{proof}

Following Bremner and Peresi \cite{BremnerPeresi},
we use the following order on the association types in degree 5 in a free right commutative algebra:
  \begin{alignat*}{4}
  &1\colon  (((ab)c)d)e  &\qquad
  &2\colon  ((a(bc))d)e  &\qquad
  &3\colon  ((ab)(cd))e  &\qquad
  &4\colon  (a((bc)d))e  
  \\
  &5\colon  ((ab)c)(de)  &\qquad
  &6\colon  (a(bc))(de)  &\qquad
  &7\colon  (ab)((cd)e)  &\qquad
  &8\colon  a(((bc)d)e)  
  \\
  &9\colon  a((bc)(de))
  \end{alignat*}
The consequences of right commutativity in degree 5 can be expressed by the following symmetries
of the association types:
  \begin{align*}
  &
  1\colon (((ab)c)d)e \;\; \text{has no symmetries} 
  \\
  &
  2\colon ((a(bc))d)e = ((a(cb))d)e	
  \qquad\qquad
  3\colon	((ab)(cd))e = ((ab)(dc))e 
  \\
  &
  4\colon (a((bc)d))e = (a((cb)d))e	
  \qquad\qquad
  5\colon	((ab)c)(de) = ((ab)c)(ed) 
  \\
  &
  6\colon (a(bc))(de) = (a(cb))(de) = (a(bc))(ed) 
  \\
  &
  7\colon (ab)((cd)e) = (ab)((dc)e)	
  \qquad\qquad
  8\colon	a(((bc)d)e) = a(((cb)d)e) 
  \\
  &
  9\colon a((bc)(de)) = a((cb)(de)) = a((bc)(ed)) = a((de)(bc))
  \end{align*}

\begin{theorem}
Let $L$ be a subspace of a Jordan dialgebra $J$ which is closed under the associator
$(a,b,c) = (ab)c - a(bc)$.
Then $L$ is a Leibniz triple system with the trilinear operation defined to be the permuted associator 
$\langle a, b, c \rangle = (a,c,b)$.
\end{theorem}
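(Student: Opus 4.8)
The plan is to show that the permuted associator $\langle a,b,c\rangle = (a,c,b)$ satisfies the two defining identities \eqref{LTS-A} and \eqref{LTS-B} of Definition \ref{definitionLTS}; by that definition (equivalently, by Lemma \ref{2identities}) this is precisely what is required for $L$ to be a Leibniz triple system. Since the associator $(x,y,z) = (xy)z - x(yz)$ is a fixed quadratic expression in the bilinear product, each of \eqref{LTS-A} and \eqref{LTS-B}, after substitution of the permuted associator, becomes a multilinear polynomial identity of degree $5$ in $a,b,c,d,e$. It therefore suffices to prove that both resulting degree-$5$ expressions are consequences of the defining identities of a Jordan dialgebra. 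This is the dialgebra analogue of the classical fact, recalled above, that the associator in a Jordan algebra satisfies the Lie triple system identities.

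First I would use right commutativity to put the inner product into convenient form: since $a(cb)\equiv a(bc)$, we have $\langle a,b,c\rangle \equiv (ac)b - a(bc)$. Substituting this into \eqref{LTS-A} and \eqref{LTS-B} and expanding each of the nested associators turns both identities into explicit linear combinations of degree-$5$ monomials in the free right commutative algebra on $a,b,c,d,e$. I would then, following Bremner and Peresi \cite{BremnerPeresi}, work inside this free right commutative algebra, where right commutativity is already built in, and rewrite every monomial in terms of the nine association types listed above, collapsing equal terms by means of the recorded symmetries. This reduces each of \eqref{LTS-A} and \eqref{LTS-B} to a single explicit element of the multilinear degree-$5$ component.

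The remaining task is to show that each of these two elements lies in the subspace spanned by the multilinear degree-$5$ consequences of the linearized right Jordan identity \eqref{RJidentity} and the linearized right Osborn identity \eqref{ROidentity} — that is, by the terms obtained from $RJ$ and $RO$ by permuting their arguments and by either multiplying by, or substituting a product for, the fifth variable. I would establish this by exhibiting, for each of \eqref{LTS-A} and \eqref{LTS-B}, the explicit linear combination of such consequences of $RJ$ and $RO$ that reproduces the expanded left-hand side; once this combination is written down, the verification is a routine check in the free right commutative algebra.

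The main obstacle is the size and bookkeeping of this degree-$5$ multilinear computation: the space of multilinear monomials in five variables distributed over nine association types is large, and locating the precise combination of arguments-permuted, variable-lifted images of $RJ$ and $RO$ that equals each target expression is the crux of the argument. In practice this is a linear-algebra problem over the multilinear degree-$5$ component, best organized and solved with the computer-algebra machinery of \cite{BremnerPeresi}: one forms the matrix whose rows are all such consequences of $RJ$ and $RO$, reduces it, and confirms that the expanded forms of \eqref{LTS-A} and \eqref{LTS-B} lie in its row space.
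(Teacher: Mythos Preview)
Your proposal is correct and follows essentially the same approach as the paper: expand the permuted associators in the free right commutative algebra, normalize via the listed association types and symmetries, and show that the resulting degree-$5$ expressions lie in the span of the consequences of the linearized right Jordan and right Osborn identities. The paper's only tactical refinement is to verify \eqref{LTS1}, \eqref{LTS2}, \eqref{LTS-B}, \eqref{LTS3} rather than \eqref{LTS-A} and \eqref{LTS-B}, since the first two of these follow from right commutativity alone (no $RJ$ or $RO$ needed), and then to exhibit by hand the explicit linear combinations of $RJ$ and $RO$ that witness the remaining two identities rather than appealing to a matrix computation.
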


\begin{proof}
It suffices to verify that identities \eqref{LTS1}, \eqref{LTS2}, \eqref{LTS-B}, \eqref{LTS3} 
are satisfied by the permuted associator.
We first consider \eqref{LTS1} and \eqref{LTS2}; we show in fact that these identities follow from 
right commutativity, without using the Jordan and Osborn identities.
In \eqref{LTS1} and \eqref{LTS2} we replace each occurrence of the operation $\langle a, b, c \rangle$ 
by the permuted associator $(a,c,b)$:
  \begin{align*}
  &
  ( a, e, ( b, d, c ) )
  +
  ( a, e, ( c, d, b ) )
  \equiv 0,
  \\
  &
  ( a, e, ( b, d, c ) )
  +
  ( a, e, ( c, b, d ) )
  +
  ( a, e, ( d, c, b ) )
  \equiv 0.
  \end{align*}
Expanding the associators gives
  \begin{align*}
  &
  (ae)((bd)c) - (ae)(b(dc)) - a(e((bd)c)) + a(e(b(dc)))
  \\
  &\quad
  +
  (ae)((cd)b) - (ae)(c(db)) - a(e((cd)b)) + a(e(c(db)))
  \equiv 0,
  \\
  &
  (ae)((bd)c) - (ae)(b(dc)) - a(e((bd)c)) + a(e(b(dc)))
  \\
  &\quad
  +
  (ae)((cb)d) - (ae)(c(bd)) - a(e((cb)d)) + a(e(c(bd)))
  \\
  &\quad
  +
  (ae)((dc)b) - (ae)(d(cb)) - a(e((dc)b)) + a(e(d(cb)))
  \equiv 0.
  \end{align*}
Both equations are immediate consequences of right commutativity.  

We next consider identity \eqref{LTS-B}.
Replacing each occurrence of $\langle a, b, c \rangle$ by the permuted associator $(a,c,b)$ gives
  \[
  (a,(c,e,d),b) - ((a,c,b),e,d) + ((a,d,b),e,c) - ((a,e,b),c,d) + ((a,e,b),d,c).
  \]
Expanding the associators produces the following expression:
  \begin{align*}
  &
  (a((ce)d))b - (a(c(ed)))b - a(((ce)d)b) + a((c(ed))b)
  \\
  - \;
  &
  (((ac)b)e)d + ((a(cb))e)d + ((ac)b)(ed) - (a(cb))(ed)
  \\
  + \;
  &
  (((ad)b)e)c - ((a(db))e)c - ((ad)b)(ec) + (a(db))(ec)
  \\
  - \;
  &
  (((ae)b)c)d + ((a(eb))c)d + ((ae)b)(cd) - (a(eb))(cd)
  \\
  + \;
  &
  (((ae)b)d)c - ((a(eb))d)c - ((ae)b)(dc) + (a(eb))(dc).
  \end{align*}
We straighten each term using right commutativity, and sort the terms 
by association type and then by lex order of the permutation;
some of the terms cancel:
  \begin{equation}
  \label{LTS5part1}
  \left\{
  \begin{array}{l}
  {}
  - (((ac)b)e)d 
  + (((ad)b)e)c 
  - (((ae)b)c)d 
  + (((ae)b)d)c 
  \\[3pt]
  {}
  + ((a(bc))e)d  
  - ((a(bd))e)c 
  + ((a(be))c)d 
  - ((a(be))d)c 
  \\[3pt]
  {}
  + (a((ce)d))b 
  - (a((de)c))b 
  + ((ac)b)(de) 
  - ((ad)b)(ce) 
  \\[3pt]
  {}
  - (a(bc))(de)
  + (a(bd))(ce)
  - a(((ce)d)b) 
  + a(((de)c)b).
  \end{array}
  \right. 
  \end{equation}  
Consider the following expression, which clearly vanishes in any Jordan dialgebra:
  \begin{align*}
  &
    RJ( ce, b, d, a ) 
  - RJ( de, b, c, a ) 
  + RJ( b, c, e, a ) d 
  - RJ( b, d, e, a ) c 
  \\
  &\quad
  - RO( a, b, ce, d ) 
  + RO( a, b, de, c ) 
  - RO( a, b, c, e ) d 
  + RO( a, b, d, e ) c. 
  \end{align*} 
Expanding each term using equations \eqref{RJidentity} and \eqref{ROidentity} gives
  \allowdisplaybreaks
  \begin{align*}
  &
  (a((ce)b))d 
  + (a((ce)d))b 
  + (a(bd))(ce) 
  - (a(ce))(bd) 
  - (ab)((ce)d) 
  - (ad)((ce)b) 
  \\
  - \;
  &
  (a((de)b))c 
  - (a((de)c))b 
  - (a(bc))(de) 
  + (a(de))(bc) 
  + (ab)((de)c) 
  + (ac)((de)b) 
  \\
  + \;
  &
  ((a(bc))e)d 
  + ((a(be))c)d 
  + ((a(ce))b)d 
  - ((ab)(ce))d 
  - ((ac)(be))d 
  - ((ae)(bc))d 
  \\
  - \;
  &
  ((a(bd))e)c 
  - ((a(be))d)c 
  - ((a(de))b)c 
  + ((ab)(de))c 
  + ((ad)(be))c 
  + ((ae)(bd))c 
  \\
  - \;
  &
  ((a(ce))b)d 
  - ((ad)b)(ce) 
  + (ab)((ce)d) 
  + (a(ce))(bd) 
  + (ad)(b(ce)) 
  - a(((ce)d)b) 
  \\
  + \;
  &
  ((a(de))b)c 
  + ((ac)b)(de) 
  - (ab)((de)c) 
  - (a(de))(bc) 
  - (ac)(b(de)) 
  + a(((de)c)b) 
  \\
  - \;
  &
  (((ac)b)e)d 
  - (((ae)b)c)d 
  + ((ab)(ce))d 
  + ((ac)(be))d 
  + ((ae)(bc))d 
  - (a((ce)b))d 
  \\
  + \;
  &
  (((ad)b)e)c 
  + (((ae)b)d)c 
  - ((ab)(de))c 
  - ((ad)(be))c 
  - ((ae)(bd))c 
  + (a((de)b))c. 
  \end{align*}  
We straighten each term using right commutativity, and sort the terms 
by association type and then by lex order of the permutation.
Most of the terms cancel, and we obtain an expression identical to \eqref{LTS5part1}.

We finally consider \eqref{LTS3}.
Replacing each occurrence of $\langle a, b, c \rangle$ by the permuted associator $(a,c,b)$ gives
  \begin{align*}
  &
  ((c,e,d),a,b) - ((c,e,d),b,a) - ((c,a,b),e,d) + ((c,b,a),e,d) 
  \\
  &
  - (c,e,(a,d,b)) - (c,(a,e,b),d).
  \end{align*}
Expanding the associators produces the following expression:
  \begin{align*}
  &
  + (((ce)d)a)b - ((c(ed))a)b - ((ce)d)(ab) + (c(ed))(ab)
  \\
  &
  - (((ce)d)b)a + ((c(ed))b)a + ((ce)d)(ba) - (c(ed))(ba)
  \\
  &
  - (((ca)b)e)d + ((c(ab))e)d + ((ca)b)(ed) - (c(ab))(ed)
  \\
  &
  + (((cb)a)e)d - ((c(ba))e)d - ((cb)a)(ed) + (c(ba))(ed)
  \\
  & 
  - (ce)((ad)b) + (ce)(a(db)) + c(e((ad)b)) - c(e(a(db)))
  \\
  &
  - (c((ae)b))d + (c(a(eb)))d + c(((ae)b)d) - c((a(eb))d).
  \end{align*}
We straighten each term using right commutativity, and sort the terms 
by association type and then by lex order of the permutation;
some of the terms cancel:
  \begin{equation}
  \label{LTS6part1}
  \left\{
  \begin{array}{l}
  {}
  - (((ca)b)e)d 
  + (((cb)a)e)d 
  + (((ce)d)a)b 
  - (((ce)d)b)a 
  \\[3pt]
  {}
  - ((c(de))a)b 
  + ((c(de))b)a 
  - (c((ae)b))d 
  + (c((be)a))d 
  \\[3pt]
  {}
  + ((ca)b)(de) 
  - ((cb)a)(de) 
  - (ce)((ad)b) 
  + (ce)((bd)a) 
  \\[3pt]
  {}
  + c(((ad)b)e) 
  + c(((ae)b)d) 
  - c(((bd)a)e)
  - c(((be)a)d)
  \end{array}
  \right. 
  \end{equation}    
Consider the following expression, which clearly vanishes in any Jordan dialgebra:
  \begin{align*}
  &
    c RJ( a, d, e, b ) 
  - c RJ( b, d, e, a ) 
  + RO( ce, a, b, d ) 
  - RO( ce, b, a, d ) 
  \\
  &\quad
  - RO( c, a, de, b ) 
  + RO( c, b, de, a ) 
  + RO( c, a, b, e ) d 
  - RO( c, b, a, e ) d. 
  \end{align*} 
Expanding each term using equations \eqref{RJidentity} and \eqref{ROidentity} gives
  \allowdisplaybreaks
  \begin{align*} 
  &
  c((b(ad))e) + c((b(ae))d) + c((b(de))a) - c((ba)(de)) - c((bd)(ae)) - c((be)(ad))
  \\
  - 
  \;&
  c((a(bd))e) - c((a(be))d) - c((a(de))b) + c((ab)(de)) + c((ad)(be)) + c((ae)(bd))
  \\
  + 
  \;&
  (((ce)b)a)d + (((ce)d)a)b - ((ce)a)(bd) - ((ce)b)(ad) - ((ce)d)(ab) + (ce)((bd)a)
  \\
  - 
  \;&
  (((ce)a)b)d - (((ce)d)b)a + ((ce)b)(ad) + ((ce)a)(bd) + ((ce)d)(ba) - (ce)((ad)b)
  \\
  - 
  \;&
  ((c(de))a)b - ((cb)a)(de) + (ca)((de)b) + (c(de))(ab) + (cb)(a(de)) - c(((de)b)a)
  \\
  + 
  \;&
  ((c(de))b)a + ((ca)b)(de) - (cb)((de)a) - (c(de))(ba) - (ca)(b(de)) + c(((de)a)b)
  \\
  +
  \;&
  (((cb)a)e)d + (((ce)a)b)d - ((ca)(be))d - ((cb)(ae))d - ((ce)(ab))d + (c((be)a))d  
  \\
  -
  \;&
  (((ca)b)e)d - (((ce)b)a)d + ((cb)(ae))d + ((ca)(be))d + ((ce)(ba))d - (c((ae)b))d.
  \end{align*}
We straighten each term using right commutativity, and sort the terms 
by association type and then by lex order of the permutation.
Most of the terms cancel, and we obtain an expression identical to \eqref{LTS6part1}.
\end{proof}


\section{Universal Leibniz envelopes for Leibniz triple systems} \label{sectionuniversalleibnizenvelopes}

Suppose that $T$ is a Leibniz triple system with product $\langle x, y, z \rangle$.
We construct the free Leibniz algebra $A(T)_L$ on the underlying vector space,
and consider the ideal $I(T) \subseteq A(T)_L$ generated by the elements 
$( x \cdot y ) \cdot z - \langle x, y, z \rangle$,
where $x \cdot y$ is the Leibniz product in $A(T)_L$.
The quotient algebra $U(T) = A(T)_L / I(T)$ is the universal Leibniz envelope of $T$.
If $a, b, c \in T$ then in $U(T)$ we have the identity
$(a \cdot b ) \cdot c \equiv \langle a, b, c \rangle$;
it follows that in $U(T)$ every monomial of degree 3 or more is equal to a monomial of degree 1 or 2.
Furthermore, it is clear from the discussion in Section \ref{sectionfreeleibnizalgebras}
that the intersection of $I(T)$ with $T \oplus T^{\otimes 2}$ is zero.

\begin{theorem} \label{universal envelope}
\emph{(a)}
The universal Leibniz envelope of the Leibniz triple system $T$ is the vector space 
$U(T) = T \oplus ( T \otimes T )$ 
with the Leibniz product
  \begin{alignat*}{2}
  a \cdot b 
  &= 
  ab,
  &\qquad\qquad
  a \cdot bc 
  &= 
  \langle a,b,c \rangle - \langle a,c,b \rangle,
  \\
  ab \cdot c
  &= 
  \langle a,b,c \rangle,
  &\qquad\qquad
  ab \cdot cd 
  &=
  \langle a,b,c \rangle d - \langle a,b,d \rangle c.
  \end{alignat*}
(The simple tensor $a \otimes b$ is denoted $ab$.)

\emph{(b)}
If $T$ has dimension $n$ then $U(T)$ has dimension $n(n{+}1)$.
\end{theorem}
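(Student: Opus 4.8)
The plan is to show that the quotient map $q\colon A(T)_L \to U(T)$, restricted to the subspace $T \oplus T^{\otimes 2}$ of elements of degree at most $2$, is a linear isomorphism, and then to read off the four products by computing in the free Leibniz algebra $A(T)_L$ and reducing modulo $I(T)$. Since $\dim(T \oplus T^{\otimes 2}) = n + n^2 = n(n{+}1)$, this single isomorphism yields both the vector-space description in (a) and the dimension formula in (b) at once.

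For surjectivity I would argue that every basis monomial of degree $m \ge 3$ reduces modulo $I(T)$ into $T \oplus T^{\otimes 2}$. Writing such a monomial as the simple tensor $v_1 \otimes \cdots \otimes v_m$ and using that simple tensors are left-normalized Leibniz products (Section \ref{sectionfreeleibnizalgebras}), its leftmost triple satisfies $(v_1 \cdot v_2) \cdot v_3 \equiv \langle v_1, v_2, v_3 \rangle \in T$, so the monomial is congruent modulo $I(T)$ to a sum of simple tensors of degree $m-2$; trilinearity of $\langle -,-,- \rangle$ lets this replacement be carried out inside the resulting linear combination. Iterating drops odd degrees down to $1$ and even degrees down to $2$, which is exactly the reduction recorded before the statement. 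Injectivity is then immediate from the fact, established from the structure of the free Leibniz algebra, that $I(T) \cap (T \oplus T^{\otimes 2}) = 0$: if $w \in T \oplus T^{\otimes 2}$ satisfies $q(w)=0$ then $w \in I(T)$, hence $w = 0$.

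With the isomorphism in hand, each product in (a) is obtained by multiplying the corresponding simple tensors in $A(T)_L$ via Example \ref{example_LB_Monomials} and then reducing. Thus $a \cdot b = ab$ and $ab \cdot c = a \otimes b \otimes c \equiv \langle a,b,c \rangle$ need no further work; $a \cdot bc = abc - acb \equiv \langle a,b,c \rangle - \langle a,c,b \rangle$; and $ab \cdot cd = abcd - abdc$, whose two degree-four terms reduce through $(a \cdot b)\cdot c \equiv \langle a,b,c \rangle$ to $\langle a,b,c \rangle d - \langle a,b,d \rangle c$, again landing in $T \oplus T^{\otimes 2}$. No Leibniz identity has to be verified for these products, since $U(T)$ is by construction a quotient Leibniz algebra and we have merely transported its product along the isomorphism.

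The genuinely substantive point is injectivity, that is, the vanishing $I(T) \cap (T \oplus T^{\otimes 2}) = 0$ that I am quoting. Because the generators $(x \cdot y) \cdot z - \langle x,y,z \rangle$ of $I(T)$ are inhomogeneous, with a degree-three part minus a degree-one part, it is not formally automatic that the ideal they generate avoids degrees $1$ and $2$; what makes it work is that the rewriting of $(x \cdot y) \cdot z$ as $\langle x,y,z \rangle$ is consistent, forcing no new relations in low degree, and this consistency is precisely what the defining identities \eqref{LTS-A} and \eqref{LTS-B} guarantee by matching the identities satisfied by the left-normalized triple in $A(T)_L$ (Section \ref{sectionfreeleibnizalgebras}). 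Everything else is routine bookkeeping.
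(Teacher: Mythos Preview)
Your argument is correct and matches the paper's one-line proof (``immediate from the universal property of the free Leibniz algebra $A(T)_L$''): surjectivity by iterated reduction of left-normalized words modulo $I(T)$, injectivity by quoting $I(T) \cap (T \oplus T^{\otimes 2}) = 0$, and then reading off the four products from Example \ref{example_LB_Monomials}. The paper, however, also supplies a direct verification that the four product formulas on $T \oplus (T \otimes T)$ satisfy the Leibniz identity, checking degrees $3$ through $6$ case by case and invoking \eqref{LTS-A}, \eqref{LTS-B}, and \eqref{bracket of R and L} in degree $5$.

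That direct verification is more than merely ``instructive'': it is the cleanest way to \emph{prove} the injectivity you are quoting. Once $T \oplus (T \otimes T)$ is known to be a Leibniz algebra under those formulas, the universal property of $A(T)_L$ yields a Leibniz homomorphism $A(T)_L \to T \oplus (T \otimes T)$ extending the identity on $T$; this map kills each generator $(x \cdot y)\cdot z - \langle x,y,z \rangle$ of $I(T)$ and restricts to the identity on degrees $\le 2$, so $I(T) \cap (T \oplus T^{\otimes 2}) = 0$. Your final paragraph is on the right track, but the appeal to \eqref{LTS-A}--\eqref{LTS-B} ``matching the identities satisfied by the left-normalized triple in $A(T)_L$'' only uses that the LTS identities \emph{hold} for the iterated bracket (Section \ref{sectionfreeleibnizalgebras}); it does not use, and cannot yet use, that these are \emph{all} such identities --- that stronger statement is proved only \emph{after} the envelope is constructed. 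So the paper's degree-by-degree check is doing real work that your sketch defers.
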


The proof of this theorem is immediate from the universal property of the free Leibniz algebra $A(T)_L$.
However, it is instructive to give a direct proof that the equations of the theorem satisfy the Leibniz
identity, especially in degree 5 where we need the defining identities for Leibniz
triple systems.

\subsection*{Degree 3}

If $a, b, c \in T$ then the definitions immediately give
  \[
  ( a \cdot b ) \cdot c - ( a \cdot c ) \cdot b - a \cdot ( b \cdot c )
  =
  \langle a,b,c \rangle - \langle a,c,b \rangle - \big( \langle a,b,c \rangle - \langle a,c,b \rangle \big)
  =
  0.
  \]
\subsection*{Degree 4}

For $a, b, c, d \in T$ there are three cases for the first term of the Leibniz identity:
  \[
  ( ab \cdot c ) \cdot d,
  \qquad
  ( a \cdot bc ) \cdot d,
  \qquad
  ( a \cdot b ) \cdot cd.
  \]
For the first case, the definitions immediately give
  \[
  ( ab \cdot c ) \cdot d - ( ab \cdot d ) \cdot c - ab \cdot cd
  =
  \langle a,b,c \rangle d - \langle a,b,d \rangle c 
  - ( \langle a,b,c \rangle d - \langle a,b,d \rangle c )
  = 0. 
  \]
For the second case we need to use the equation $\langle a,b,c \rangle \equiv (ab)c$ in $U(T)$:
  \begin{align*}
  &
  ( a \cdot bc ) \cdot d - ( a \cdot d ) \cdot bc - a \cdot ( bc \cdot d )
  \\
  = \;
  &
  ( \langle a,b,c \rangle d - \langle a,c,b \rangle d )
  -
  ( \langle a,d,b \rangle c - \langle a,d,c \rangle b )
  -
  a \langle b,c,d \rangle 
  \\
  = \;
  &
  abcd - acbd - adbc + adcb - a \cdot bcd
  =
  0,
  \end{align*}
using Example \ref{example_LB_Monomials}. The third case is similar.

\subsection*{Degree 5}
If $a, b, c, d, e \in T$ then again there are three cases for the first term:
  \[
  ( ab \cdot cd ) \cdot e,
  \qquad
  ( ab \cdot c ) \cdot de,
  \qquad
  ( a \cdot bc ) \cdot de.
  \]
For the first case, we have
  \begin{align*}
  &
  ( ab \cdot cd ) \cdot e - ( ab \cdot e ) \cdot cd - ab \cdot ( cd \cdot e )
  \\
  = \;
  &
  ( \langle a,b,c \rangle d - \langle a,b,d \rangle c ) \cdot e
  -
  \langle a,b,e \rangle \cdot cd
  -
  ab \cdot \langle c,d,e \rangle
  \\
  = \;
  &
  \langle \langle a, b, c \rangle, d, e \rangle - \langle \langle a, b, d \rangle, c, e \rangle
  -
  \langle \langle a, b, e \rangle, c, d \rangle + \langle \langle a, b, e \rangle, d, c \rangle
  -
  \langle a, b, \langle c, d, e \rangle \rangle,
  \end{align*}
which vanishes by \eqref{LTS-B}. For the second case, we have
  \begin{align*}
  &
  ( ab \cdot c ) \cdot de - ( ab \cdot de ) \cdot c - ab \cdot ( c \cdot de )
  \\
  = \;
  &
  \langle a,b,c \rangle \cdot de 
  - 
  ( \langle a,b,d \rangle e ) - \langle a,b,e \rangle d ) \cdot c 
  -
  ab \cdot ( \langle c,d,e \rangle - \langle c,e,d \rangle )
  \\
  = \;
  &
  \langle \langle a, b, c \rangle, d, e \rangle 
  - 
  \langle \langle a, b, c \rangle, e, d \rangle
  -
  \langle \langle a, b, d \rangle, e, c \rangle 
  + 
  \langle \langle a, b, e \rangle, d, c \rangle
  \\
  &
  -
  \langle a, b, \langle c, d, e \rangle \rangle
  +
  \langle a, b, \langle c, e, d \rangle \rangle 
  = 0
  \end{align*}
by \eqref{bracket of R and L}. For the third case, we have
  \begin{align*}
  &
  ( a \cdot bc ) \cdot de - ( a \cdot de ) \cdot bc - a \cdot ( bc \cdot de )
  \\
  = \;
  &
  ( \langle a,b,c \rangle - \langle a,c,b \rangle ) \cdot de
  -
  ( \langle a,d,e \rangle - \langle a,e,d \rangle ) \cdot bc
  -
  a \cdot ( \langle b,c,d \rangle e - \langle b,c,e \rangle d )
  \\
  = \;
  &
  \langle \langle a, b, c \rangle, d, e \rangle
  -
  \langle \langle a, b, c \rangle, e, d \rangle
  -
  \langle \langle a, c, b \rangle, d, e \rangle
  +
  \langle \langle a, c, b \rangle, e, d \rangle
  \\
  &
  -
  \langle \langle a, d, e \rangle, b, c \rangle
  +
  \langle \langle a, d, e \rangle, c, b \rangle
  +
  \langle \langle a, e, d \rangle, b, c \rangle
  -
  \langle \langle a, e, d \rangle, c, b \rangle
  \\
  &
  -
  \langle a, \langle b, c, d \rangle, e \rangle
  +
  \langle a, e, \langle b, c, d \rangle \rangle
  +
  \langle a, \langle b, c, e \rangle, d \rangle
  -
  \langle a, d, \langle b, c, e \rangle \rangle.
  \end{align*}
We use \eqref{LTS-A} and \eqref{LTS-B} to rewrite the last four terms (with a sign change):
  \begin{align*}
  &
  \langle a, \langle b, c, d \rangle, e \rangle
  -
  \langle a, \langle b, c, e \rangle, d \rangle
  +
  \langle a, d, \langle b, c, e \rangle \rangle
  -
  \langle a, e, \langle b, c, d \rangle \rangle
  \\
  = \;
  &
  \langle \langle a, b, c \rangle, d, e \rangle
  -
  \langle \langle a, c, b \rangle, d, e \rangle
  -
  \langle \langle a, d, b \rangle, c, e \rangle
  +
  \langle \langle a, d, c \rangle, b, e \rangle
  \\
  &
  -
  \langle \langle a, b, c \rangle, e, d \rangle
  +
  \langle \langle a, c, b \rangle, e, d \rangle
  +
  \langle \langle a, e, b \rangle, c, d \rangle
  -
  \langle \langle a, e, c \rangle, b, d \rangle
  \\
  &
  +
  \langle \langle a, d, b \rangle, c, e \rangle
  -
  \langle \langle a, d, c \rangle, b, e \rangle
  -
  \langle \langle a, d, e \rangle, b, c \rangle
  +
  \langle \langle a, d, e \rangle, c, b \rangle
  \\
  &
  -
  \langle \langle a, e, b \rangle, c, d \rangle
  +
  \langle \langle a, e, c \rangle, b, d \rangle
  +
  \langle \langle a, e, d \rangle, b, c \rangle
  -
  \langle \langle a, e, d \rangle, c, b \rangle
  \\
  = \;
  &
  \langle \langle a, b, c \rangle, d, e \rangle
  -
  \langle \langle a, c, b \rangle, d, e \rangle
  -
  \langle \langle a, b, c \rangle, e, d \rangle
  +
  \langle \langle a, c, b \rangle, e, d \rangle
  \\
  &
  -
  \langle \langle a, d, e \rangle, b, c \rangle
  +
  \langle \langle a, d, e \rangle, c, b \rangle
  +
  \langle \langle a, e, d \rangle, b, c \rangle
  -
  \langle \langle a, e, d \rangle, c, b \rangle.
  \end{align*}
This cancels with the first 8 terms, and completes the proof for degree 5. 
  
\subsection*{Degree 6}

If $a, b, c, d, e, f \in T$ then there is only one case:
  \begin{align*}
  &
  ( ab \cdot cd ) \cdot ef 
  -
  ( ab \cdot ef ) \cdot cd 
  -
  ab \cdot ( cd \cdot ef )
  \\
  = \;
  &
  \langle \langle a, b, c \rangle, d, e \rangle f
  -
  \langle \langle a, b, c \rangle, d, f \rangle e
  -
  \langle \langle a, b, d \rangle, c, e \rangle f
  +
  \langle \langle a, b, d \rangle, c, f \rangle e  
  \\
  &
  -
  \langle \langle a, b, e \rangle, f, c \rangle d
  +
  \langle \langle a, b, e \rangle, f, d \rangle c
  +  
  \langle \langle a, b, f \rangle, e, c \rangle d
  -  
  \langle \langle a, b, f \rangle, e, d \rangle c 
  \\
  &
  -
  \langle a, b, \langle c, d, e \rangle \rangle f 
  +
  \langle a, b, f \rangle \langle c, d, e \rangle
  +
  \langle a, b, \langle c, d, f \rangle \rangle e 
  -
  \langle a, b, e \rangle \langle c, d, f \rangle.  
  \end{align*}
Since this is an expression of even degree, we must rewrite it entirely in terms of the binary Leibniz product.
The first eight terms convert directly to this form using the rule 
$\langle \langle a, b, c \rangle, d, e \rangle f = ((((ab)c)d)e)f$.
We rewrite the last four terms (with a sign change) and repeatedly apply the Leibniz identity:
  \begin{align*}
  &
  \langle a, b, \langle c, d, e \rangle \rangle f 
  -
  \langle a, b, \langle c, d, f \rangle \rangle e 
  +
  \langle a, b, e \rangle \langle c, d, f \rangle
  -
  \langle a, b, f \rangle \langle c, d, e \rangle
  \\
  = \;
  & 
  ((ab)((cd)e))f - ((ab)((cd)f))e + ((ab)e)((cd)f) - ((ab)f)((cd)e)
  \\
  = \;
  &
  ((((ab)c)d)e)f - ((((ab)d)c)e)f - ((((ab)e)c)d)f + ((((ab)e)d)c)f
  \\
  &
  - ((((ab)c)d)f)e + ((((ab)d)c)f)e + ((((ab)f)c)d)e - ((((ab)f)d)c)e
  \\
  &
  + ((((ab)e)c)d)f - ((((ab)e)d)c)f - ((((ab)e)f)c)d + ((((ab)e)f)d)c
  \\
  &
  - ((((ab)f)c)d)e + ((((ab)f)d)c)e + ((((ab)f)e)c)d - ((((ab)f)e)d)c
  \\
  = \;
  &
  ((((ab)c)d)e)f - ((((ab)d)c)e)f - ((((ab)c)d)f)e + ((((ab)d)c)f)e
  \\
  &
  - ((((ab)e)f)c)d + ((((ab)e)f)d)c + ((((ab)f)e)c)d - ((((ab)f)e)d)c
  \end{align*}
This cancels with the first 8 terms, and completes the proof.

\begin{theorem}
Over a field of characteristic not 2 or 3, 
every polynomial identity satisfied by the iterated Leibniz bracket $\langle \langle a, b \rangle, c \rangle$
in every Leibniz algebra is a consequence of the defining identities for Leibniz triple systems.
\end{theorem}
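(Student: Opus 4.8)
The plan is to leverage the universal Leibniz envelope constructed in Theorem~\ref{universal envelope}. The guiding principle is that an identity is a consequence of the defining identities of a variety precisely when it vanishes in the free object of that variety. So it suffices to show that the free Leibniz triple system $F$ embeds into some Leibniz algebra in such a way that its triple product becomes the iterated bracket $\langle \langle -,-\rangle,-\rangle$. Once such an embedding is available, any identity that holds for the iterated bracket in every Leibniz algebra must in particular hold in this ambient algebra, hence must vanish in $F$, and is therefore a consequence of \eqref{LTS-A} and \eqref{LTS-B}.

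First I would record the embedding supplied by Theorem~\ref{universal envelope}. For a Leibniz triple system $T$ the envelope is $U(T) = T \oplus (T \otimes T)$ with $a \cdot b = ab$ and $ab \cdot c = \langle a,b,c \rangle$, so that the iterated bracket satisfies $(a \cdot b)\cdot c = \langle a,b,c\rangle$ and lands back in the degree-one summand $T$. Thus $T$ sits inside $U(T)$ as a subspace that is closed under the iterated bracket and on which the iterated bracket reproduces the original triple product. Because the intersection of the defining ideal with $T \oplus T^{\otimes 2}$ is zero, as noted just before Theorem~\ref{universal envelope}, this copy of $T$ is faithful; hence every Leibniz triple system embeds into a Leibniz algebra with its triple product realised as the iterated bracket.

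Next I would apply this to the free Leibniz triple system $F$ on a countable set of generators and form $U(F)$. Let $p$ be a polynomial identity satisfied by the iterated bracket in every Leibniz algebra. Evaluating $p$ on elements of $F$ inside $U(F)$, each application of the triple operation is computed as an iterated bracket $(x \cdot y)\cdot z = \langle x,y,z\rangle$, which stays within the degree-one part $F$; consequently the value of $p$ computed with the iterated bracket of $U(F)$ coincides with its value computed with the triple product of $F$. Since $p$ is an identity for the iterated bracket, this value is $0$, so $p$ vanishes identically in $F$. As $F$ is free in the variety of Leibniz triple systems, vanishing in $F$ means exactly that $p$ is a consequence of \eqref{LTS-A} and \eqref{LTS-B}. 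I expect this embedding argument to require no restriction on the characteristic; the hypothesis that the characteristic is not 2 or 3 is inherited from the surrounding development rather than used here.

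The main obstacle has in fact already been overcome: it is the verification, carried out in the proof of Theorem~\ref{universal envelope}, that the four products on $T \oplus (T \otimes T)$ satisfy the Leibniz identity, the decisive cases being degree $5$, where \eqref{LTS-A}, \eqref{LTS-B} and the operator identities are needed, and degree $6$. Granting that theorem, the present statement is essentially formal. The only point requiring care is the closure observation that $(a \cdot b)\cdot c = \langle a,b,c\rangle$ remains in the degree-one summand, which guarantees that iterated brackets of elements of $F$ never leave the faithful copy of $F$ and so faithfully compute the triple-system polynomial $p$.
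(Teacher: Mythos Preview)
Your proposal is correct and follows essentially the same approach as the paper: both arguments hinge on the fact, supplied by Theorem~\ref{universal envelope}, that the free Leibniz triple system embeds into its universal Leibniz envelope with the triple product realised as the iterated bracket, so any identity of the iterated bracket must already vanish in the free Leibniz triple system. The paper phrases this via a contradiction and an explicit factorisation $\phi^\dagger \circ \eta = \iota$ through the free Leibniz algebra on the same generators, whereas you argue directly by evaluating the identity in $U(F)$; the content is the same.
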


\begin{proof}
Suppose to the contrary that there is a polynomial identity $I = I(a_1,\dots,a_n)$ in $n$ indeterminates,
which is satisfied by the iterated Leibniz bracket, but is not a consequence of the defining identities
for Leibniz triple systems.  Such an identity $I$ is a nonzero element of the free Leibniz triple system
$T = T_n$ on $n$ generators $a_1,\dots,a_n$ which is in the kernel of the natural 
evaluation map $\eta\colon T \to L$, 
$\langle a, b, c \rangle \mapsto \langle \langle a, b \rangle, c \rangle$, 
where $L = L_n$ is the free Leibniz algebra on the same $n$ generators.  Let $U(T)$ be the universal Leibniz 
envelope of $T$ as constructed in Theorem \ref{universal envelope}; then there is an injective homomorphism 
of Leibniz triple systems $\iota\colon T \to U(T)^\dagger$ where the dagger denotes the Leibniz triple system 
obtained from the Leibniz algebra $U(T)$ by replacing the binary product $\langle a, b \rangle$ by the 
iterated Leibniz bracket $\langle \langle a, b \rangle, c \rangle$.  By the universal property of the free
Leibniz algebra $L$, there is a (unique) surjective homomorphism $\phi\colon L \to U(T)$, which is the
identity map on the generators $a_1,\dots,a_n$, and which satisfies the condition 
$\phi^\dagger \circ \eta = \iota$, 
where $\phi^\dagger\colon L^\dagger \to U(T)^\dagger$ is the same as $\phi$ but regarded as a homomorphism 
of Leibniz triple systems.  If $\mathrm{ker}(\eta) \ne \{0\}$ then $\eta$ is not injective, and hence
$\iota$ is not injective, which is a contradiction.  This proves that such a polynomial identity $I$
cannot exist.
\end{proof}


\section{Two-dimensional Leibniz triple systems} \label{sectiontwodimensional}

In this section we give some examples of 2-dimensional Leibniz triple systems, 
and construct their universal Leibniz envelopes. 

\subsection{Leibniz triple systems}

Let $F$ be an algebraically closed field of characteristic 0, 
and let $T$ be a 2-dimensional Leibniz triple system with basis $\{ x, y \}$ 
and product $\langle -,-,- \rangle$. The system $T$ has the following structure constants
where $\alpha_{ijk}, \beta_{ijk} \in F$:
  \begin{alignat*}{2}
  \langle x, x, x \rangle &= \alpha_{111}x + \beta_{111}y, 
  &\qquad\qquad   
  \langle x, x, y \rangle &= \alpha_{112}x + \beta_{112}y,  
  \\
  \langle x, y, x \rangle &= \alpha_{121}x + \beta_{121}y, 
  &\qquad\qquad  
  \langle x, y, y \rangle &= \alpha_{122}x + \beta_{122}y, 
  \\
  \langle y, x, x \rangle &= \alpha_{211}x + \beta_{211}y, 
  &\qquad\qquad 
  \langle y, x, y \rangle &= \alpha_{212}x + \beta_{212}y, 
  \\
  \langle y, y, x \rangle &= \alpha_{221}x + \beta_{221}y, 
  &\qquad\qquad  
  \langle y, y, y \rangle &= \alpha_{222}x + \beta_{222}y.
  \end{alignat*}
Imposing identities \eqref{LTS-A} and \eqref{LTS-B} we obtain a system of quadratic equations
in the indeterminates $\alpha_{ijk}, \beta_{ijk}$.
We present five solutions of these equations: four isolated cases, and a one-parameter family.

By Example \ref{Lie2LeibnizExample}, any Lie triple system is a Leibniz triple system.
Jacobson \cite[page 312]{Jacobson} presents the three isomorphism classes of 2-dimensional Lie triple systems
over an algebraically closed field $F$ of characteristic not two.
Ignoring the system with zero multiplication, we have two cases,
where zero products are omitted:
  \begin{alignat}{4}
  &\langle x, y, x \rangle =  y, 
  &\quad
  &\langle y, x, x \rangle = -y.
  \label{system1}
  \\
  &\langle x, y, x \rangle =  2x, 
  &\quad
  &\langle y, x, x \rangle = -2x, 
  &\quad
  &\langle x, y, y \rangle = -2y, 
  &\quad
  &\langle y, x, y \rangle =  2y.
  \label{system2}
  \end{alignat}
The Leibniz triple systems which are not Lie triple systems are the following:
  \begin{alignat}{2}
  &\langle x, y, y \rangle = x, 
  &\qquad
  &\langle y, y, y \rangle = x.
  \label{system3}
  \\
  &\langle x, y, y \rangle = -x,
  &\qquad
  &\langle y, y, y \rangle =  x.
  \label{system4}
  \\
  &\langle x, y, y \rangle = \zeta x,
  &\qquad
  &\langle y, y, y \rangle = (1-\zeta) x.
  \label{system5}
  \end{alignat}
The parameter $\zeta$ can be any element of the field $F$, including 0.

\begin{conjecture}
Over an algebraically closed field of characteristic 0, 
every 2-dimensional Leibniz triple system is isomorphic to one of the systems \eqref{system1}--\eqref{system5}.
\end{conjecture}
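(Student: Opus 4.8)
The plan is to reduce the classification to a computational algebraic-geometry problem followed by an orbit analysis under change of basis. A 2-dimensional Leibniz triple system is determined by the 16 structure constants $\alpha_{ijk},\beta_{ijk}$ listed above. Since \eqref{LTS-A} and \eqref{LTS-B} are multilinear of degree 5 and each side is a composite of two trilinear products, substituting the basis elements $x,y$ into the five arguments and extracting the $x$- and $y$-components produces a system of \emph{quadratic} polynomial equations in the 16 unknowns. First I would assemble this system: there are $2^5 = 32$ substitutions for each of the two identities, giving at most $128$ scalar equations, of which only a few are independent after discarding trivial and duplicate relations.

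Next I would study the affine variety $V \subseteq F^{16}$ defined by these equations. Computing a Gr\"obner basis of the generated ideal and carrying out a primary decomposition would exhibit $V$ as a finite union of irreducible components. Because the equations have integer coefficients, this computation can be performed over $\mathbb{Q}$, with the solutions interpreted over the algebraically closed field $F$; no issues of field extension then arise. I expect the systems \eqref{system1}--\eqref{system5} to correspond to these components, with the isolated cases \eqref{system1}--\eqref{system4} sitting at (or on) low-dimensional components and the one-parameter family \eqref{system5} forming a curve.

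The substantive step is to pass from the variety of \emph{structure-constant tuples} to the set of \emph{isomorphism classes}. The group $GL_2(F)$ acts on $F^{16}$ by change of basis in $\{x,y\}$, and two systems are isomorphic precisely when their tuples lie in one orbit. For each component I would select an orbit representative by exploiting the available normalizations (scaling and mixing $x,y$) and splitting into cases according to discrete invariants: whether the product is skew-symmetric in its first two arguments, which separates the Lie cases \eqref{system1}--\eqref{system2} from the genuinely Leibniz cases \eqref{system3}--\eqref{system5} (cf. Example \ref{Lie2LeibnizExample}); the ranks of the operators $L_{x,y}$ and $R_{x,y}$; and the eigenstructure of the multiplication operators. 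The operator identities \eqref{Rab}--\eqref{bracket of R and L} should streamline this casework, since they force each $R_{a,b}$ to be a derivation and each $L_{a,b}$ to be a Lie triple derivation of the 2-dimensional system, which severely constrains their matrices.

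The main obstacle is exactly the completeness of this orbit analysis, and it is the reason the statement appears as a conjecture rather than a theorem. Solving the quadratic system and producing the five families is essentially mechanical; the hard part is to certify \emph{rigorously} that every point of every component of $V$ is $GL_2(F)$-equivalent to precisely one of \eqref{system1}--\eqref{system5}, with no sporadic orbit escaping the case split and no accidental coincidence among the families at special parameters. In particular one must verify that no member of the family \eqref{system5} is isomorphic to any isolated system, and that distinct values of $\zeta$ yield non-isomorphic systems modulo the evident symmetry of the family. Converting the Gr\"obner/decomposition computation into a human-checkable proof of this orbit classification, rather than a machine assertion, is where the genuine difficulty resides; an alternative route through the universal envelopes $U(T)$ (which have dimension $6$ by Theorem \ref{universal envelope}) would instead reduce the problem to classifying the corresponding 6-dimensional Leibniz algebras, but this does not appear to be any simpler.
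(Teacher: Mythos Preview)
The statement is labelled \emph{Conjecture} in the paper and is not proved there; the authors merely present the five systems as solutions of the structure-constant equations and verify one sample identity. There is therefore no proof in the paper against which to compare your proposal.

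Your proposal is not a proof but an outline of a strategy, and you recognise this yourself when you write that the completeness of the orbit analysis ``is the reason the statement appears as a conjecture rather than a theorem.'' The outline is reasonable as far as it goes: writing down the quadratic system in the sixteen structure constants, decomposing the resulting variety, and then reducing modulo the $GL_2(F)$-action is exactly the programme one would expect. But none of the actual work has been carried out. You have not exhibited the Gr\"obner basis or the primary decomposition, you have not shown that the components are exhausted by the listed families, and you have not verified that distinct values of $\zeta$ in \eqref{system5} give non-isomorphic systems (or identified any hidden isomorphisms among \eqref{system3}, \eqref{system4}, and special members of \eqref{system5}). Until those computations are done and the orbit analysis is made rigorous, what you have is a plan of attack on an open conjecture, not a proof.
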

     
Let us verify, for example, that system \eqref{system5} satisfies \eqref{LTS-B}:
  \[
  \langle a, b, \langle c, d, e \rangle \rangle  
  - 
  \langle \langle a, b, c \rangle, d, e \rangle  
  + 
  \langle \langle a, b, d \rangle, c, e \rangle  
  + 
  \langle \langle a, b, e \rangle, c, d \rangle   
  - 
  \langle \langle a, b, e \rangle, d, c \rangle
  \equiv 0.
  \]
We make the following substitutions:
  \[
  a = a_1 x + a_2 y,
  \;\;
  b = b_1 x + b_2 y,
  \;\;  
  c = c_1 x + c_2 y,
  \;\;
  d = d_1 x + d_2 y,
  \;\;
  e = e_1 x + e_2 y.
  \]
For the first term of the identity we note that \eqref{system5} implies that 
$\langle c, d, e \rangle$ is a multiple of $x$, 
and hence that $\langle a, b, \langle c, d, e \rangle \rangle = 0$.
For the second term we obtain
  \begin{align*}
  &
  \langle \; 
  \langle \; a_1 x + a_2 y, \; b_1 x + b_2 y, \; c_1 x + c_2 y \; \rangle, \;
  d_1 x + d_2 y, \; 
  e_1 x + e_2 y \; 
  \rangle 
  \\
  = \;
  & 
  \langle \; 
  a_1 b_2 c_2 \zeta x + a_2 b_2 c_2 (1-\zeta) x, \;
  d_1 x + d_2 y, \; 
  e_1 x + e_2 y \; 
  \rangle 
  \\
  = \;
  & 
  \big( a_1 b_2 c_2 \zeta + a_2 b_2 c_2 (1-\zeta) \big)
  d_2 e_2 \zeta x
  =
  \zeta \big( a_1 \zeta + a_2 (1-\zeta) \big)
  b_2 c_2 d_2 e_2 x.  
  \end{align*}
For the third, fourth and fifth terms we apply appropriate permutations of $c, d, e$
and obtain the same result; hence the alternating sum of these four terms is zero.

\subsection{Universal Leibniz envelopes}

By Theorem \ref{universal envelope} we know that
the universal Leibniz envelope $U(T)$ of the Leibniz triple system $T$ has dimension 6 
and basis $\{ x, y, x^2, xy, yx, y^2 \}$ 
where $x^2$, $xy$, $yx$, $y^2$ denote respectively $x \otimes x$, $x \otimes y$, $y \otimes x$, $y \otimes y$. 
We easily obtain the following structure constants for the universal Leibniz envelopes 
of the 2-dimensional Leibniz triple systems.
We note that for systems \eqref{system1} and \eqref{system2},
even though the Leibniz triple system is a Lie triple system, 
the universal Leibniz envelope is not a Lie algebra, 
since it is not anticommutative.
  \begin{alignat*}{2}
  &\text{System \eqref{system1}:}
  &\qquad
  &
  \begin{array}{l|rrrrrr}
  .   &\quad x   &\quad y   &\quad x^2 & xy   & yx   &\quad y^2 \\
  \midrule
  x   &\quad x^2 &\quad xy  &\quad .   & -y   & y    &\quad . \\ 
  y   &\quad yx  &\quad y^2 &\quad .   & .    & .    &\quad . \\
  x^2 &\quad .   &\quad .   &\quad .   & .    & .    &\quad . \\
  xy  &\quad  y  &\quad .   &\quad .   &  y^2 & -y^2 &\quad . \\
  yx  &\quad -y  &\quad .   &\quad .   & -y^2 &  y^2 &\quad . \\
  y^2 &\quad .   &\quad .   &\quad .   & .    & .    &\quad .
  \end{array}
  \\[8pt]
  &\text{System \eqref{system2}:}
  &\qquad
  &
  \begin{array}{l|rrrrrr}
  .   &\quad x   & y   &\quad x^2 & xy  &\quad yx   &\quad y^2 \\
  \midrule
  x   &\quad x^2 & xy  &\quad .   & -2x  &  2x  &\quad . \\ 
  y   &\quad yx  & y^2 &\quad .   &  2y  & -2y  &\quad . \\
  x^2 &\quad .   & .   &\quad .   & .    & .    &\quad . \\
  xy  &\quad 2x  & -2y &\quad .   &  2(xy{+}yx) & -2(xy{+}yx) &\quad . \\
  yx  &\quad -2x & 2y  &\quad .   & -2(xy{+}yx) &  2(xy{+}yx) &\quad . \\
  y^2 &\quad .   & .   &\quad .   & .    & .    &\quad .
  \end{array}
  \\[8pt]
  &\text{System \eqref{system3}:}
  &\qquad
  &
  \begin{array}{l|rrrrrr}
  .   &\quad x   &\quad y   &\quad x^2 & xy   &\quad yx   &\quad y^2 \\
  \midrule
  x   &\quad x^2 &\quad xy  &\quad .   & .    &\quad .    &\quad . \\ 
  y   &\quad yx  &\quad y^2 &\quad .   & .    &\quad .    &\quad . \\
  x^2 &\quad .   &\quad .   &\quad .   & .    &\quad .    &\quad . \\
  xy  &\quad .   &\quad x   &\quad .   & -x^2 &\quad x^2  &\quad . \\
  yx  &\quad .   &\quad .   &\quad .   & .    &\quad .    &\quad . \\
  y^2 &\quad .   &\quad x   &\quad .   & -x^2 &\quad x^2  &\quad .
  \end{array}  
  \\[8pt]
  &\text{System \eqref{system4}:}
  &\qquad
  &
  \begin{array}{l|rrrrrr}
  .   &\quad x   & y   &\quad x^2 & xy   & yx   &\quad y^2 \\
  \midrule
  x   &\quad x^2 & xy  &\quad .   & .    & .    &\quad . \\ 
  y   &\quad yx  & y^2 &\quad .   & .    & .    &\quad . \\
  x^2 &\quad .   & .   &\quad .   & .    & .    &\quad . \\
  xy  &\quad .   & -x  &\quad .   & x^2  & -x^2 &\quad . \\
  yx  &\quad .   & .   &\quad .   & .    & .    &\quad . \\
  y^2 &\quad .   & x   &\quad .   & -x^2 & x^2  &\quad .
  \end{array}
  \\[8pt]
  &\text{System \eqref{system5}:}
  &\qquad
  &
  \begin{array}{l|rrrrrr}
  .   &\quad x   & y            &\quad x^2 & xy             & yx             &\quad y^2 \\
  \midrule
  x   &\quad x^2 & xy           &\quad .   & .              & .              &\quad . \\ 
  y   &\quad yx  & y^2          &\quad .   & .              & .              &\quad . \\
  x^2 &\quad .   & .            &\quad .   & .              & .              &\quad . \\
  xy  &\quad .   & \zeta x      &\quad .   & -\zeta x^2     & \zeta x^2      &\quad . \\
  yx  &\quad .   & .            &\quad .   & .              & .              &\quad . \\
  y^2 &\quad .   & (1{-}\zeta)x &\quad .   & (\zeta{-}1)x^2 & (1{-}\zeta)x^2 &\quad .
  \end{array}
  \end{alignat*}


\section*{Acknowledgements}

The first author was supported by a Discovery Grant from NSERC,
the Natural Sciences and Engineering Research Council of Canada.
The second author was supported by the Spanish MEC and Fondos FEDER jointly through project MTM2010-15223, 
and by the Junta de Andaluc\'ia (projects FQM-336 and FQM2467).
She thanks the Department of Mathematics and Statistics at the University of Sas\-kat\-che\-wan
for its hospitality during her visit from March to June 2011.


\end{document}